\newcolumntype{C}{>{\centering\arraybackslash}X}
\newtheorem{theorem}{Theorem}
\newtheorem{corollary}[theorem]{Corollary}
\newtheorem{lemma}[theorem]{Lemma}
\newtheorem{observation}[theorem]{Observation}
\newtheorem{problem}[theorem]{Problem}
\newtheorem{proposition}[theorem]{Proposition}
\newtheorem{remark}[theorem]{Remark}
\newcommand{\Kneser}[2]{\ensuremath{K}({#1,#2})}
\newcommand{\N}{\mathbb{N}}
\newcommand{\ktuple}[1]{\gamma_{\times #1}}
\newcommand{\ktuplet}[1]{\gamma_{t\times #1}}
\newcommand{\kdom}[1]{\gamma_{#1}}
\newcommand{\packing}{\rho_{2}}
\title{$k$-Domination invariants on Kneser graphs}
\date{\today}
\author[1,2]{Bo\v{s}tjan Bre\v{s}ar}
\author[3,4]{Mar\'ia Gracia Cornet}
\author[1,2]{Tanja Dravec}
\author[5]{Michael A. Henning}
\affil[1]{\footnotesize Faculty of Natural Sciences and Mathematics, University of Maribor, Slovenia}
\affil[2]{\footnotesize Institute of Mathematics, Physics and Mechanics, Ljubljana, Slovenia}
\affil[3]{\footnotesize Depto. de Matem\'atica, FCEIA, Universidad Nacional de Rosario, Argentina}
\affil[4]{\footnotesize Consejo Nacional de Investigaciones Científicas y Técnicas, Argentina}
\affil[5]{\footnotesize Department of Mathematics and Applied Mathematics, University of Johannesburg, South Africa}
\begin{document}

\maketitle

\begin{abstract}
In this follow-up to [M.G.~Cornet, P.~Torres, arXiv:2308.15603], where the $k$-tuple domination number and the 2-packing number in Kneser graphs $K(n,r)$ were studied, we are concerned with two variations, the $k$-domination number, $\kdom{k}(K(n,r))$, and the $k$-tuple total domination number, $\ktuplet{k}(K(n,r))$, of $K(n,r)$. For both invariants we prove monotonicity results by showing that $\kdom{k}(K(n,r))\ge \kdom{k}(K(n+1,r))$  holds for any $n\ge 2(k+r)$, and $\ktuplet{k}(K(n,r))\ge \ktuplet{k}(K(n+1,r))$ holds for any $n\ge 2r+1$. We prove that $\kdom{k}(K(n,r))=\ktuplet{k}(K(n,r))=k+r$ when $n\geq r(k+r)$, and that in this case every $\kdom{k}$-set and $\ktuplet{k}$-set is a clique, while $\kdom{k}(r(k+r)-1,r)=\ktuplet{k}(r(k+r)-1,r)=k+r+1$, for any $k\ge 2$.  
Concerning the 2-packing number, $\rho_2(K(n,r))$, of $K(n,r)$, we prove the exact values of $\rho_2(K(3r-3,r))$ when $r\ge 10$, and give sufficient conditions for $\rho_2(K(n,r))$ to be equal to some small values by imposing bounds on $r$ with respect to $n$. We also prove a version of monotonicity for the $2$-packing number of Kneser graphs.   
\end{abstract}

\noindent {\bf Keywords}: Kneser graphs, $k$-domination, $k$-tuple total domination, $2$-packing.

\medskip

\noindent {\bf AMS subject classification (2010)}: 05C69, 05D05

\section{Introduction}
\label{sec: intro}

Letting $n\geq 2r$ the {\em Kneser graph} $\Kneser{n}{r}$ has the $r$-subsets of an $n$-set as its vertices and two vertices are adjacent in $\Kneser{n}{r}$ if the corresponding sets are disjoint.
The interest for Kneser graphs goes back to 1960s and 1970s when two classical theorems concerning their independence and chromatic number were proved~\cite{erdos1961,lovasz1978}. Many other graph invariants have been investigated in Kneser graphs, which makes them one of the most intensively studied classes of graphs. In particular, several authors have considered the domination number of Kneser graphs (see the most recent paper~\cite{ostergaard2014bounds}), but only lower and upper bounds have been found in general, while the exact values of $\gamma(\Kneser{n}{r})$ were found only when $n$ is sufficiently large with respect to~$r$. In three recent papers, the authors have considered Kneser graphs in relation with some variations of domination, namely, Grundy domination~\cite{bkt-2019}, Roman domination~\cite{zec2023} and $k$-tuple domination~\cite{cornet2023k}. This paper is a follow-up of the latter, where we extend our investigation from $k$-tuple domination to two similar variations, which we next present.

A \emph{dominating set} in a graph $G$ is a set $S$ of vertices of $G$ such that every vertex outside $S$ is adjacent to at least one vertex in $S$. The \emph{domination number} of $G$, denoted by $\gamma(G)$, is the minimum cardinality of a dominating set in $G$. A thorough treatise on dominating sets can be found in the so-called ``domination books''~\cite{HaHeHe-20,HaHeHe-21,HaHeHe-23,henningyeo-13}.

Let $G$ be a graph and $k\in\N$. A set $D\subseteq V(G)$ is a \textit{$k$-dominating set} of $G$ if every vertex $u\in V(G)\setminus D$ has at least $k$ neighbors in $D$ (or, equivalently, if $|N_G(u)\cap D|\geq k$ for each $u\in V(G)\setminus D$ where $N_G(u)$ denotes the open neighborhood of the vertex~$u$ in $G$). The \textit{$k$-domination number} of $G$ is the minimum cardinality of a $k$-dominating set of $G$, and is denoted by $\kdom{k}(G)$. A {\em $\kdom{k}$-set} of $G$ is a $k$-dominating set of cardinality $\kdom{k}(G)$. See~\cite{fink1985n1,fink1985n2,chellali2012k,hansberg2020multiple} for a selection of papers considering $k$-domination.

A set $D\subseteq V(G)$ is a \textit{$k$-tuple dominating set} of $G$ if the closed neighborhood $N_G[u]$ of each vertex $u \in V(G)$ has at least $k$ vertices in $D$ (or, equivalently, if $|N_G[u]\cap D|\geq k$ for each $u\in V(G)$). The \textit{$k$-tuple domination number} of $G$ is the minimum cardinality of a $k$-tuple dominating set of $G$, and is denoted by $\ktuple{k}(G)$. A {\em $\ktuple{k}$-set} of $G$ is a $k$-tuple dominating set of cardinality $\ktuple{k}(G)$. See~\cite{harary2000double,hansberg2020multiple} for a selection of papers on $k$-tuple domination.

Finally, a set $D\subseteq V(G)$ is a \textit{$k$-tuple total dominating set} of $G$ if the open neighborhood of each vertex $u\in V(G)$ has at least $k$ vertices in $D$ (that is, if $|N_G(u)\cap D|\geq k$ for each $u\in V(G)$). The \textit{$k$-tuple total domination number} of $G$ is the minimum cardinality of a $k$-tuple total dominating set of $G$, and is denoted by $\ktuplet{k}(G)$. As usual, a {\em $\ktuplet{k}$-set} of $G$ stands for a $k$-tuple total dominating set of $G$ of cardinality $\ktuplet{k}(G)$. The $k$-tuple total domination number was studied, for example, in~\cite{henningkazemi, henningyeo-10}, while all three invariants were studied for the case $k=2$ in~\cite{bonomo2018}.

Note that $\ktuple{k}(G)$ is defined only in graphs $G$ with $\delta(G)\ge k-1$, while $\ktuplet{k}(G)$ is defined only if $\delta(G)\ge k$, where $\delta(G)$ denotes the minimum degree of vertices in $G$. One can derive immediately from the definitions that 
\begin{equation}
\label{e:invariants}
\kdom{k}(G)\le \ktuple{k}(G) \le \ktuplet{k}(G) 
\end{equation}
holds for any graph $G$ in which the corresponding invariants are defined. 
Clearly, when $k=1$, both $\ktuple{1}(G)$ and $\kdom{1}(G)$ correspond to the well-known domination number $\gamma(G)$, and $\ktuplet{1}(G)$ corresponds to the total domination number $\gamma_t(G)$. In this paper, we denote by $\kdom{k}(n,r)$, $\ktuple{k}(n,r)$ and $\ktuplet{k}(n,r)$ the $k$-domination number, the $k$-tuple domination number and the $k$-tuple total domination number, respectively, of the Kneser graph $\Kneser{n}{r}$.

Concerning the domination number of Kneser graphs, it was shown to be monotonically decreasing when $r$ is fixed and $n$ grows~\cite{gorodezky2007dominating} and a similar result was proved for the $k$-tuple domination number in~\cite{cornet2023k}. In this vein we prove in Section~\ref{sec:monotone} that $\ktuplet{k}(n,r)\ge \ktuplet{k}(n+1,r)$ and $\kdom{k}(n,r)\ge \kdom{k}(n+1,r)$ hold for any positive integers $n$ and $r$, where $n\ge 2r+1$, respectively $n\ge 2(k+r)$. In Section~\ref{sec:exact}, we obtain exact values for these two invariants in $K(n,r)$ when $n$ is sufficiently large with respect to $r$. Notably, we prove that $\kdom{k}(n,r)=\ktuplet{k}(n,r)=k+r$ as soon as $n\geq r(k+r)$, and, in addition, every $\kdom{k}$-set and $\ktuplet{k}$-set is a clique in this case. When $n$ is one less, we also get exact values, namely, if $k\geq 2$, we have $\kdom{k}(r(k+r)-1,r)=\ktuplet{k}(r(k+r)-1,r)=k+r+1$. In Section~\ref{sec:packing}, we study the 2-packing number, $\rho_2(K(n,r))$, of Kneser graphs, which is the largest cardinality of a set of vertices in the graph, which are pairwise at distance at least $3$ apart. We continue the study from~\cite{cornet2023k}, where the $2$-packing number was used for bounding the $k$-tuple domination number of $K(n,r)$, and $\rho_2(K(3r-2,r))$ was also determined for all values of $r$. Here we go a step further by considering $\rho_2(K(3r-3,r))$, where we obtain lower bounds when $r\le 8$, prove that $4$ is the exact value for $r=9$, and that $3$ is the exact value when $r\ge 10$. We also give sufficient conditions for $\rho_2(K(n,r))$ being equal to $3$, resp.~$4$, by imposing bounds on $r$ with respect to $n$. In addition, we prove a version of monotonicity, by showing that $\packing(K(n+1,r+1))\geq \packing(K(n,r))$ as soon as $n\geq 2r+2$. 

We conclude the introduction with several useful definitions. For $r,n\in\N$ with $r\leq n$, let $[r..n]$ and $[n]$ denote the sets $\{r,\ldots,n\}$ and $\{1,\ldots,n\}$ respectively. 
Given a set of vertices $D$ in $\Kneser{n}{r}$ and $x\in [n]$, the occurrences of the element $x$ in $D$, denoted by $i_x(D)$, represent the number of vertices in $D$ that contain the element $x$. This is, $i_x(D)=|\{u\in D : x\in u\}|$. 
For a positive integer $a$, we define $X_a(D)$ as the set of elements in $[n]$ such that their occurrences in $D$ is equal to $a$, i.e., $X_a(D)=\{x\in [n] : i_x(D)=a\}$. Similarly, we define $X_a^\geq(D)=\{x\in [n] : i_x(D)\geq a\}$, and $X_a^\leq(D)=\{x\in [n] : i_x(D)\leq a\}$. 
When the set $D$ is clear from the context, we shall omit it in the notation. It is important to note that the sum of the occurrences of all elements in $D$ is equal to $r$ times the cardinality of $D$, i.e., $\sum_{x\in [n]} i_x(D)=r|D|$.

\section{Monotonicity}
\label{sec:monotone}

In \cite{cornet2023k} it is proved that the function $\ktuple{k}(n,r)$ is decreasing with respect to $n$. Using the same idea, we prove the analogous statement for the function $\ktuplet{k}(n,r)$. Regarding the $k$-domination number, we show that $\kdom{k}(n,r)$ is decreasing with respect to $n$ from $n_0=2(k+r)$. 

Note that, using the standard notation for vertices of Kneser graphs, a vertex $u\in V(K(n,r))$ also belongs to $K(n+1,r)$, since it is represented as an $r$-subset of $[n]\subset [n+1]$. When $r$ is fixed, we will simplify the notation by writing $N_n(u)$ for the neighborhood $N_{K(n,r)}(u)$, where $u\subset [n]$ with $|u|=r$. In this sense, the meaning of $N_{n+1}(u)$ should also be clear.

\begin{theorem}
    For any positive integers $n$ and $r$, where $n\ge 2r+1$, 
    $$\ktuplet{k}(n,r)\ge \ktuplet{k}(n+1,r).$$ That is, $\ktuplet{k}(n,r)$ is decreasing with respect to $n$. 
\begin{proof}
    Let $D$ be a $\ktuplet{k}$-set of $\Kneser{n}{r}$. Let us show that $D$ is a $k$-tuple total dominating set of $\Kneser{n+1}{r}$. Let $u\in V(\Kneser{n+1}{r}$.
    If $u\in V(\Kneser{n+1}{r})\cap V(\Kneser{n}{r})$, then $|N_{n+1}(u)\cap D|=|N_{n}(u)\cap D|\geq k$.
    If $u\in V(\Kneser{n+1}{r})\setminus V(\Kneser{n}{r})$, then we have $u=\tilde{u}\cup\{n+1\}$ with $\tilde{u}\subseteq [n]$, $|\tilde{u}|=r-1$. Let $x\in [n]\setminus\tilde{u}$, and let $w=\tilde{u}\cup\{x\}$. Since $w\in V(\Kneser{n+1}{r})\cap V(\Kneser{n}{r})$, then $|N_{n+1}(w)\cap D|\geq k$. On the other hand,
    $$N_{n+1}(w)\cap D=\{v\in D:v\cap w=\emptyset\}\subseteq\{v\in D:v\cap \tilde{u}=\emptyset\}=\{v\in D:v\cap u=\emptyset\}.$$
    Thus, we have $|N_{n+1}(u)\cap D|\geq |N_{n+1}(w)\cap D|\geq k$.
    Therefore, $D$ is a $k$-tuple total dominating set of $\Kneser{n+1}{r}$. Consequently, $\ktuplet{k}(n+1,r)\leq |D|=\ktuplet{k}(n,r)$.
\end{proof}
\end{theorem}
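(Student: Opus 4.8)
The plan is to exploit the natural embedding of $V(\Kneser{n}{r})$ into $V(\Kneser{n+1}{r})$: every $r$-subset of $[n]$ is also an $r$-subset of $[n+1]$. Starting from a $\ktuplet{k}$-set $D$ of $\Kneser{n}{r}$, I would prove that $D$ remains a $k$-tuple total dominating set of the larger graph $\Kneser{n+1}{r}$. Since $D$ lives entirely among the vertices inherited from $\Kneser{n}{r}$, this immediately yields $\ktuplet{k}(n+1,r) \le |D| = \ktuplet{k}(n,r)$, which is exactly the claimed inequality.

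To verify the domination condition in $\Kneser{n+1}{r}$, I would split the vertices into two types. The vertices $u$ of $\Kneser{n+1}{r}$ that avoid the new element $n+1$ are precisely the vertices of $\Kneser{n}{r}$; for these the set of neighbors lying in $D$ is unchanged, because adjacency is disjointness and $D$ contains only subsets of $[n]$. Hence $|N_{n+1}(u) \cap D| = |N_{n}(u) \cap D| \ge k$ for free. The only real work concerns the new vertices, those of the form $u = \tilde{u} \cup \{n+1\}$ with $\tilde{u} \subseteq [n]$ and $|\tilde{u}| = r-1$, which have no counterpart in $\Kneser{n}{r}$.

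For such a new vertex $u$, the idea is to transfer the domination guarantee from a suitable old vertex. Concretely, I would pick any element $x \in [n] \setminus \tilde{u}$ (which exists since $n \ge 2r+1$ leaves ample room) and set $w = \tilde{u} \cup \{x\}$, an $r$-subset of $[n]$ and thus a vertex of both graphs. Because $\tilde{u} \subseteq w$ and the only element of $u$ outside $\tilde{u}$ is $n+1$, which appears in no member of $D$, any $v \in D$ disjoint from $w$ is automatically disjoint from $u$; that is, $N_{n+1}(w) \cap D \subseteq N_{n+1}(u) \cap D$. Applying the hypothesis on $D$ to the old vertex $w$ then gives $|N_{n+1}(u) \cap D| \ge |N_{n+1}(w) \cap D| = |N_{n}(w) \cap D| \ge k$, completing the check.

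The only place where care is needed is this last step, the handling of the new vertices: the crux is the neighborhood containment $N_{n+1}(w) \cap D \subseteq N_{n+1}(u) \cap D$ obtained by choosing $w$ to be a superset of $\tilde{u}$ inside $[n]$. Everything else is bookkeeping, and the hypothesis $n \ge 2r+1$ serves only to guarantee that both Kneser graphs are nondegenerate and that the auxiliary element $x$ can always be found.
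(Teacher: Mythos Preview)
Your proposal is correct and follows essentially the same approach as the paper's proof: take a $\ktuplet{k}$-set $D$ of $\Kneser{n}{r}$, observe that it still $k$-tuple totally dominates the old vertices of $\Kneser{n+1}{r}$, and for each new vertex $u=\tilde{u}\cup\{n+1\}$ compare with an old vertex $w=\tilde{u}\cup\{x\}$ via the containment $N_{n+1}(w)\cap D\subseteq N_{n+1}(u)\cap D$. The arguments and even the notation match almost verbatim.
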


\begin{lemma}\label{lem: monot kdom}
    If $n\geq 2(k+r)$, $D$ is a $\kdom{k}$-set of $\Kneser{n}{r}$ and $\tilde{u}=\{a_1,\ldots,a_{r-1}\}\subseteq[n]$, then there exists $x\in[n]\setminus\tilde{u}$ such that $\tilde{u}\cup\{x\}\notin D$. 
\begin{proof}
    Suppose that $\tilde{u}\cup\{x\}\in D$ for each $x\in[n]\setminus\tilde{u}=\{x_1,x_2,\ldots,x_{n-r+1}\}$, and let $u_i=\tilde{u}\cup \{x_i\}$ for each $i\in[n-r+1]$. Let $D'=(D\setminus\{u_1,\ldots,u_{k+1}\})\cup\{v_1,\ldots,v_k\}$, where $v_i=\{x_{k+2},\ldots,x_{k+r},x_{k+r+i}\}$ for $i\in[k]$. It is possible since $n\geq 2(k+r)$.

    Let us see that $D'$ is a $k$-dominating set of $\Kneser{n}{r}$. Let $w\in V(\Kneser{n}{r})\setminus D'$.
    \begin{itemize}
        \item If $w\cap \tilde{u}=\emptyset$ (and $w\neq u_i$ for each $i\in[k+1]$), then as $|w\cap\{x_{k+2},\ldots,x_{n-r+1}\}|\leq r$, we have $|N(w)\cap D'|\geq |\{u_i:i\geq k+2\wedge x_i\notin w\}|\geq |\{u_i:i\geq k+2\}|-r=(n-r-k)-r\geq k$ since $n\geq 2(k+r)$.
        
        \item If $w\cap \tilde{u}\neq\emptyset$ and $w\neq u_i$ for each $i\in[k+1]$, then $|N(w)\cap D'|\geq |N(w)\cap D|\geq k$, since the vertices eliminated from $D$ were not neighbors of $w$.

        \item If $w=u_i$ for some $i\in[k+1]$, then we have $|N(w)\cap D'|\geq|\{v_1,\ldots,v_k\}|=k$.
    \end{itemize}
    Therefore, $D'$ is a $k$-dominating set of $\Kneser{n}{r}$ of size $|D'|=|D|-1=\kdom{k}(n,r)-1$, and we arise to a contradiction. Thus, there is at least one element $x\in[n]\setminus\tilde{u}$ such that $\tilde{u}\cup \{x\}\notin D$.
\end{proof}
\end{lemma}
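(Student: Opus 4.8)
The plan is to argue by contradiction. Suppose that $\tilde u\cup\{x\}\in D$ for every $x\in[n]\setminus\tilde u$, enumerate $[n]\setminus\tilde u=\{x_1,\dots,x_{n-r+1}\}$, and write $u_i=\tilde u\cup\{x_i\}$, so that $u_1,\dots,u_{n-r+1}$ all lie in $D$. My goal is to manufacture from $D$ a $k$-dominating set $D'$ with $|D'|=|D|-1$; since $D$ is a $\kdom{k}$-set of $\Kneser{n}{r}$, this contradicts its minimality. The mechanism is an exchange argument: I will delete $k+1$ of the sets $u_i$ and insert only $k$ replacement vertices, which drops the cardinality by exactly one.

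Before choosing the exchange I would record the single structural fact that drives everything. Because every $u_i$ contains the fixed $(r-1)$-set $\tilde u$, any two of them intersect in $\tilde u$, so the $u_i$ are pairwise non-adjacent, and a vertex $w$ is adjacent to $u_i$ precisely when $w\cap\tilde u=\emptyset$ and $x_i\notin w$. Hence the sets $u_i$ can only help dominate vertices that are disjoint from $\tilde u$; for any $w$ meeting $\tilde u$ they are irrelevant. This tells me in advance that deleting some of the $u_i$ can only endanger the domination of (a) vertices disjoint from $\tilde u$ and (b) the deleted $u_i$ themselves, which lets me organise the verification into a short case analysis.

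Concretely I would delete $u_1,\dots,u_{k+1}$ and add $k$ new vertices $v_1,\dots,v_k$ built from fresh elements of $\{x_{k+2},\dots,x_{n-r+1}\}$: let all $v_i$ share the common $(r-1)$-set $\{x_{k+2},\dots,x_{k+r}\}$ and give $v_i$ the distinct last coordinate $x_{k+r+i}$. The purpose of this choice is twofold. First, each $v_i$ is disjoint from both $\tilde u$ and $\{x_1,\dots,x_{k+1}\}$, hence adjacent to every deleted $u_j$ with $j\le k+1$, so the $k$ new vertices by themselves $k$-dominate each removed $u_j$. Second, all these indices stay within range because $n\ge 2(k+r)$, which is where the hypothesis enters. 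I would then set $D'=(D\setminus\{u_1,\dots,u_{k+1}\})\cup\{v_1,\dots,v_k\}$ and check it is $k$-dominating by splitting an arbitrary $w\in V(\Kneser{n}{r})\setminus D'$ into the three cases the remark predicts: if $w\cap\tilde u=\emptyset$, count the surviving $u_i$ with $i\ge k+2$ adjacent to $w$, of which there are at least $(n-r-k)-r\ge k$ since at most $r$ of them can meet $w$; if $w\cap\tilde u\ne\emptyset$, the deleted vertices were never neighbors of $w$, so $|N(w)\cap D'|\ge|N(w)\cap D|\ge k$; and if $w=u_j$ with $j\le k+1$, the vertices $v_1,\dots,v_k$ finish the job.

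The delicate point I expect to guard most carefully is the first case, where the budget is tight: having surrendered $k+1$ of the $u_i$, I must still guarantee $k$ surviving neighbors of $w$, so the inequality $(n-r-k)-r\ge k$ has to be driven exactly by $n\ge 2(k+r)$, with no slack to spare. A secondary thing I would check rather than assume is that the exchange honestly lowers the count — that the $v_i$ are pairwise distinct and genuinely new, not already among the deleted vertices; this holds because each $v_i$ avoids $\tilde u$ whereas every $u_j$ contains it, so $|D'|=|D|-1$ really holds and the contradiction lands.
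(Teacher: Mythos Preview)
Your proposal is correct and follows essentially the same approach as the paper's proof: the same contradiction hypothesis, the same exchange $D'=(D\setminus\{u_1,\dots,u_{k+1}\})\cup\{v_1,\dots,v_k\}$ with $v_i=\{x_{k+2},\dots,x_{k+r},x_{k+r+i}\}$, and the same three-case verification with the tight count $(n-r-k)-r\ge k$. The only cosmetic difference is that you add the bookkeeping remark that the $v_i$ are distinct and genuinely new (since they miss $\tilde u$ while every $u_j$ contains it), which the paper leaves implicit.
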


\begin{theorem}\label{thm: monot kdom}
    For any positive integers $n$ and $r$, where $n \geq 2(k+r)$, $$\kdom{k}(n,r)\geq \kdom{k}(n+1,r).$$
\begin{proof}
    Let $n\geq 2(k+r)$ and let $D$ be a $\kdom{k}$-set of $\Kneser{n}{r}$. Let us show that $D$ is a $k$-dominating set of $\Kneser{n+1}{r}$. Let $u\in V(\Kneser{n+1}{r}) \setminus D$. If $u\subseteq [n]$, then $|N_{n+1}(u)\cap D|=|N_n(u)\cap D|\geq k$. Otherwise, $u\in V(\Kneser{n+1}{r})\setminus V(\Kneser{n}{r})$. It follows that $u=\tilde{u}\cup\{n+1\}$ with $\tilde{u}=\{a_1,\ldots,a_{r-1}\}\subseteq [n]$. By Lemma \ref{lem: monot kdom} there exists $x\in[n]\setminus \tilde{u}$ such that $w=\tilde{u}\cup\{x\}\notin D$. Thus, we have
    \begin{align*}
        N_{n}(w)\cap D&=N_{n+1}(w)\cap D=\{v\in D:v\cap w=\emptyset\}=\\
        &=\{v\in D:v\cap \tilde{u}=\emptyset \wedge x\notin v\}\subseteq\{v\in D:v\cap\tilde{u}=\emptyset\}=N_{n+1}(u)\cap D.
    \end{align*}
    Consequently,
    $$|N_{n+1}(u)\cap D|\geq |N_{n}(w)\cap D|\geq k.$$
    Therefore, $D$ is a $k$-dominating set of $\Kneser{n+1}{r}$ and we have
    $$\kdom{k}(n+1,r)\leq |D|=\kdom{k}(n,r).$$
\end{proof}
\end{theorem}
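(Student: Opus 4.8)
The statement to prove is Theorem~\ref{thm: monot kdom}: for $n \geq 2(k+r)$, we have $\kdom{k}(n,r) \geq \kdom{k}(n+1,r)$. The natural strategy, mirroring the proof of the total domination monotonicity theorem just above, is to take a $\kdom{k}$-set $D$ of $K(n,r)$ and show that the very same set $D$ is already a $k$-dominating set of $K(n+1,r)$. This immediately yields $\kdom{k}(n+1,r) \leq |D| = \kdom{k}(n,r)$.

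The plan is to verify the $k$-domination condition for an arbitrary vertex $u \in V(K(n+1,r)) \setminus D$, splitting into two cases according to whether $u$ uses the new element $n+1$. If $u \subseteq [n]$, then $u$ is a vertex of $K(n,r)$ as well; moreover adding the isolated-from-$D$ element $n+1$ to the ground set does not change which members of $D$ are disjoint from $u$, so $|N_{n+1}(u) \cap D| = |N_n(u) \cap D| \geq k$ directly from $D$ being $k$-dominating in $K(n,r)$. The interesting case is $u = \tilde{u} \cup \{n+1\}$ with $\tilde{u} = \{a_1,\ldots,a_{r-1}\} \subseteq [n]$. Here $u$ is genuinely new and not a vertex of $K(n,r)$, so I cannot invoke the domination property of $D$ at $u$ itself. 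The trick is to find a nearby ``old'' vertex $w$ whose $D$-neighborhood is contained in that of $u$.

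The key step is the application of Lemma~\ref{lem: monot kdom}: since $n \geq 2(k+r)$, there exists $x \in [n] \setminus \tilde{u}$ such that $w = \tilde{u} \cup \{x\} \notin D$. Because $w$ lies in $[n]$ and is not in $D$, the $k$-domination property of $D$ in $K(n,r)$ gives $|N_n(w) \cap D| \geq k$. Then I would establish the containment of neighborhoods: any $v \in D$ disjoint from $w$ is in particular disjoint from $\tilde{u} \subseteq w$, hence disjoint from $u = \tilde{u} \cup \{n+1\}$ provided $n+1 \notin v$, which holds automatically since $v \subseteq [n]$ (as $D$ consists of vertices of $K(n,r)$). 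This gives $N_n(w) \cap D = N_{n+1}(w) \cap D \subseteq N_{n+1}(u) \cap D$, and therefore $|N_{n+1}(u) \cap D| \geq |N_n(w) \cap D| \geq k$.

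The main obstacle—and the reason the hypothesis $n \geq 2(k+r)$ is needed—is precisely guaranteeing the existence of the replacement element $x$ with $\tilde{u} \cup \{x\} \notin D$, and this is exactly what Lemma~\ref{lem: monot kdom} supplies. Without that lemma one could conceivably have $\tilde{u} \cup \{x\} \in D$ for every admissible $x$, in which case no suitable old vertex $w$ would exist and the neighborhood-containment argument would break down. Since the lemma is already proved (via the counting/exchange argument that produces a smaller $k$-dominating set, contradicting minimality), the remaining work in the theorem is just the two-case bookkeeping above, which is routine once the lemma is in hand.
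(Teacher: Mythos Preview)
Your proposal is correct and follows essentially the same approach as the paper's proof: take a $\kdom{k}$-set $D$ of $K(n,r)$, split vertices of $K(n+1,r)\setminus D$ into those contained in $[n]$ and those containing $n+1$, and in the latter case invoke Lemma~\ref{lem: monot kdom} to produce $w=\tilde{u}\cup\{x\}\notin D$ whose $D$-neighborhood is contained in that of $u$. The reasoning, the case split, and the use of the lemma all match the paper's argument.
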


Note that $\kdom{k}(n,r)$ is not necessarily decreasing for any $n$, but $\kdom{k}(n_1,r) \geq \kdom{n}(n_2,r)$ holds for any $n_1 \leq n_2$, if $n_1$ is large enough. To see that the monotonicity may not hold if $n_1$ is small, consider the Kneser graphs $K(n,2)$, where $\kdom{2}(5,2) < \kdom{2}(6,2)$, $\kdom{2}(6,2)=\kdom{2}(7,2)$ and $\kdom{2}(7,2)>\kdom{2}(8,2)$, see Table~\ref{tab: K(n,2)}.

\section{\texorpdfstring{Exact values for large $n$}{Large n}}
\label{sec:exact}

In \cite{cornet2023k}, the following result is stated.

\begin{theorem}[\cite{cornet2023k}]\label{thm: ktuple large n}
For any $n\ge 2r$,    $\ktuple{k}(n,r)=k+r$ if and only if $n\geq r(k+r)$.
\end{theorem}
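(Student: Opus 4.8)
The plan is to reduce both directions to a single structural lemma: \emph{a $k$-tuple dominating set of $\Kneser{n}{r}$ of size at most $k+r$ must consist of pairwise disjoint $r$-sets (a clique) and have size exactly $k+r$}; I take $k\ge 2$ here, as explained below. Granting this, the theorem follows quickly. For the ``if'' direction, when $n\ge r(k+r)$ I can choose $k+r$ pairwise disjoint $r$-subsets of $[n]$ — this is exactly where $n\ge r(k+r)$ is needed, since $k+r$ disjoint $r$-sets occupy $r(k+r)$ distinct elements. Writing $D=\{A_1,\dots,A_{k+r}\}$ for such a clique, any $r$-set $u$ meets at most $r$ of the $A_i$, because the $A_i$ are disjoint and each element of $u$ lies in at most one of them; hence at least $(k+r)-r=k$ of the $A_i$ are disjoint from $u$, so $|N[u]\cap D|\ge k$ and $D$ is $k$-tuple dominating. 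This gives $\ktuple{k}(n,r)\le k+r$. The reverse inequality $\ktuple{k}(n,r)\ge k+r$ and the ``only if'' direction both come from the lemma: it forbids $k$-tuple dominating sets of size $<k+r$ (they would have to be cliques of size exactly $k+r$), and it forces any size-$(k+r)$ solution to be a clique, whence $n\ge r(k+r)$.

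So the real work is the lemma, and I would prove it by producing an under-dominated vertex. First reformulate the condition for a set $D$ with $|D|=s$: $D$ is $k$-tuple dominating exactly when every $r$-set $v\notin D$ meets at most $s-k$ members of $D$ (equivalently, at least $k$ members are disjoint from $v$). If $D$ is a clique, a well-chosen $v$ obtained by taking one element from each of $\min(r,s)$ members meets $\min(r,s)$ members; requiring at least $k$ members to remain disjoint then forces $\min(r,s)\le s-k$, i.e.\ $s\ge k+r$. If $D$ is not a clique, some element $x$ has $i_x(D)\ge 2$, so a single coordinate of $v$ already meets $\ge 2$ members; spending $x$ together with one representative element from $r-1$ further members produces a $v$ meeting $\ge r+1$ members, forcing $s-(r+1)\ge k$, i.e.\ $s\ge k+r+1$. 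Comparing the two cases, the minimum size $k+r$ is attainable only by cliques, which is the lemma.

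The main obstacle is the existence step hidden in ``produce such a $v$'': the candidate must be an $r$-set that is \emph{not} itself a member of $D$, since only for $v\notin D$ does meeting more than $s-k$ members contradict domination (for $v\in D$ the closed neighborhood absorbs $v$ itself). This is delicate precisely in the tight regime where every element has occurrence at most $2$ and hitting $r+1$ members consumes all $r$ coordinates of $v$, leaving no free slot to perturb $v$ out of $D$. Here the hypothesis $k\ge 2$ is essential: in the critical case $s=k+r$ it leaves $s-2=k+r-2\ge r$ members beyond the two pinned by $x$, giving enough freedom in the choice of members and of representative elements to steer $v$ away from $D$ (while $n\ge 2r$ ensures room to complete $v$ to an $r$-set). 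That $k\ge 2$ cannot be dropped is shown by the Petersen graph $\Kneser{5}{2}$, where the three mutually intersecting $2$-sets $\{1,2\},\{1,3\},\{2,3\}$ form a dominating ($k=1$) set of size $3=k+r$ that is not a clique even though $5<6=r(k+r)$; every candidate $v$ built by the recipe above collapses back into this set, which is exactly the failure the lemma rules out when $k\ge 2$. Pinning down this avoidance step cleanly — probably via a short count of how many of the admissible candidate $r$-sets can lie in $D$ — is the part I expect to require the most care.
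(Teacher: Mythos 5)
First, note that this statement is not proved in the paper at all: it is quoted verbatim from~\cite{cornet2023k}, so there is no in-paper proof to compare against. The closest thing the paper contains is its own proof of the analogous result for $k$-domination (Theorem~\ref{thm: kdom for large n}), and your strategy is essentially that proof transplanted to $k$-tuple domination: reformulate domination as ``every $r$-set outside $D$ meets at most $|D|-k$ members of $D$'', build a witness $r$-set hitting too many members, and split on whether $D$ is a clique. Your ``if'' direction (the clique of $k+r$ pairwise disjoint $r$-sets, each vertex meeting at most $r$ of them) is correct and complete, and the reduction of both directions to the structural lemma is sound.

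The genuine gap is exactly the one you flag yourself: in the non-clique case you must exhibit a witness $v$ with $|v|=r$ that meets at least $r+1$ members of $D$ \emph{and} satisfies $v\notin D$, and you do not carry this out. This is not a routine detail that can be deferred. If the constructed $v$ happens to lie in $D$, then $k$-tuple domination of $v$ only requires $|N(v)\cap D|\ge k-1$, and with $|D|=k+r$ and $v$ meeting $r+1$ members (itself included) one gets $|N(v)\cap D|\le k-1$ --- perfectly consistent, no contradiction. The paper's proof of Theorem~\ref{thm: kdom for large n} shows what closing this gap actually costs: one first proves that the maximum occurrence satisfies $i_a=2$ (via a separate witness construction that uses $n\ge k+2r$ to escape $D$ by a counting argument), then identifies the trapped witness $w$ with $u_1$ or $u_2$, and performs an explicit element swap $w'=(w\setminus\{b_2\})\cup\{x\}$ whose membership outside $D$ is certified by the occurrence bound $i_a=2$; the case $r=2$ needs yet another ad hoc argument. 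None of this is present in your proposal, so the lemma --- and with it the lower bound $\ktuple{k}(n,r)\ge k+r$ and the entire ``only if'' direction --- remains unproved. A secondary issue: you prove (at best) the statement for $k\ge 2$, while the theorem as quoted has no such restriction; your own Petersen example $\{1,2\},\{1,3\},\{2,3\}$ in $\Kneser{5}{2}$ is in fact a counterexample not just to your lemma but to the ``only if'' direction of the stated theorem at $(k,r,n)=(1,2,5)$, so if you restrict to $k\ge2$ you should say explicitly that the $k=1$ case is being excluded and why.
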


Moreover, it is shown that except in the case $k=1$ and $r=2$, every $\ktuple{k}$-set of $K(n,r)$, where $n \geq r(k+r)$, is a clique. If $k=1$ and $r=2$, then in \cite{ivanvco1993domination} it is proved that a $\ktuple{1}$-set of $K(n,2)$ is either a clique or an independent set. In any case, if $\ktuple{k}(n,r)=k+r$, then it is possible to obtain a $\gamma_k$-set that is a clique. As a by-product, we get the following for the $k$-tuple total domination number.

\begin{theorem}
    For any $n\ge 2r$, $\ktuplet{k}(n,r)=k+r$ if and only if $n\geq r(k+r)$. Moreover, for any $n \geq r(k+r)$, every $\ktuplet{k}$-set is a clique.
\begin{proof}
    Let $n,r,k\in\N$. If $n\geq r(k+r)$, then $\ktuplet{k}(n,r)\geq \ktuple{k}(n,r)=k+r$. Since there exists a $\ktuple{k}$-set $D$ which is a clique on $k+r$ vertices, it is also a $k$-tuple total dominating set of $\Kneser{n}{r}$. Thus, $\ktuplet{k}(n,r)=k+r$.  Conversely, if $\ktuplet{k}(n,r)=k+r$, then since any $\ktuplet{k}$-set $D$ is a $k$-tuple dominating set, then $\ktuple{k}(n,r)\leq |D|=k+r$. Due to monotonicity of $\ktuple{k}(n,r)$ with respect to $n$, we have $\ktuple{k}(n,r)=k+r$, and by Theorem \ref{thm: ktuple large n} it follows that $n\geq r(k+r)$. Hence $D$ is a $\ktuple{k}$-set of $K(n,r)$ for $n \geq r(k+r)$ and thus $D$ is a clique by the results described before the theorem.    
\end{proof}
\end{theorem}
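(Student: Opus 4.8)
The plan is to deduce this from the corresponding statement for $\ktuple{k}$ (Theorem~\ref{thm: ktuple large n}) using the inequality chain~\eqref{e:invariants}, so that almost nothing new has to be computed. For the ``if'' direction, suppose $n \ge r(k+r)$. The lower bound is immediate: \eqref{e:invariants} and Theorem~\ref{thm: ktuple large n} give $\ktuplet{k}(n,r) \ge \ktuple{k}(n,r) = k+r$. For the upper bound I would exhibit an explicit clique. Since $n \ge r(k+r)$, the ground set $[n]$ admits $k+r$ pairwise disjoint $r$-subsets; let $D$ be the set of these, a clique on $k+r$ vertices of $\Kneser{n}{r}$.

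The only real verification is that this clique $D$ is a $k$-tuple total dominating set, where one must check the open-neighborhood condition $|N(u)\cap D| \ge k$ even for $u \in D$. For $u \in D$, pairwise disjointness yields $N(u) \cap D = D \setminus \{u\}$, so $|N(u)\cap D| = k+r-1 \ge k$. For $u \notin D$, I would argue by counting: $u$ is an $r$-set and the members of $D$ are pairwise disjoint, so $u$ meets at most $r$ of them and is therefore disjoint from at least $(k+r)-r = k$ of them, giving $|N(u)\cap D| \ge k$. Hence $D$ is a $k$-tuple total dominating set and $\ktuplet{k}(n,r) \le |D| = k+r$, which together with the lower bound settles the ``if'' direction.

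For the ``only if'' direction, let $D$ be a $\ktuplet{k}$-set with $|D| = k+r$. Since the open-neighborhood requirement is stronger than the closed-neighborhood one, $D$ is in particular a $k$-tuple dominating set, so $\ktuple{k}(n,r) \le k+r$. To upgrade this to equality I would use monotonicity of $\ktuple{k}(n,r)$ in $n$ (from~\cite{cornet2023k}): we may assume $n < r(k+r)$, since otherwise there is nothing to prove, and then monotonicity gives $\ktuple{k}(n,r) \ge \ktuple{k}(r(k+r),r) = k+r$ by Theorem~\ref{thm: ktuple large n}. Thus $\ktuple{k}(n,r) = k+r$, and Theorem~\ref{thm: ktuple large n} forces $n \ge r(k+r)$, contradicting the assumption; so in fact $n \ge r(k+r)$.

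Finally, for the ``moreover'' part, any $\ktuplet{k}$-set $D$ has $|D| = k+r = \ktuple{k}(n,r)$ and is a $k$-tuple dominating set, hence a $\ktuple{k}$-set of $\Kneser{n}{r}$; the results recalled before the theorem then say $D$ is a clique, with the sole exception $k=1$, $r=2$, where a $\ktuple{1}$-set could a priori be an independent set. I expect this exceptional case to be the one subtle point: I would dispose of it by observing that a $\ktuplet{1}$-set is a total dominating set, so every vertex of $D$ must have a neighbor inside $D$; an independent (hence pairwise intersecting) set of $2$-subsets cannot satisfy this, so $D$ must be the clique alternative. This yields that every $\ktuplet{k}$-set is a clique in all cases.
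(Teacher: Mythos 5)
Your proof is correct and follows essentially the same route as the paper: the lower bound from $\ktuplet{k}(n,r)\ge\ktuple{k}(n,r)=k+r$, the upper bound from a clique of $k+r$ pairwise disjoint $r$-sets (whose $k$-tuple total domination you verify directly), and the converse via monotonicity of $\ktuple{k}(n,r)$ in $n$ together with Theorem~\ref{thm: ktuple large n}. One point where you are actually more careful than the paper: in the ``moreover'' part you explicitly dispose of the exceptional case $k=1$, $r=2$, where a $\ktuple{1}$-set could a priori be an independent set, by observing that a total dominating set cannot be independent; the paper's proof invokes ``the results described before the theorem'' without addressing this case.
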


\begin{remark}\label{rem: condition kdom}
    If $D$ is a $k$-dominating set of the Kneser graph $\Kneser{n}{r}$ and $w\in V(\Kneser{n}{r})\setminus D$, then $w$ has nonempty intersection with at most $|D|-k$ vertices of $D$. Otherwise, $|N(w)\cap D|<k$ contradicting the fact that $D$ is $k$-dominating.
\end{remark}

\begin{theorem}\label{thm: kdom for large n}
    If $k\geq 2$ and $n\geq k+2r$, then 
    \begin{itemize}
        \item $\kdom{k}(n,r)=k+r$ if and only if $n\geq r(k+r)$; moreover, every $\kdom{k}$-set is a clique;
        \item $\kdom{k}(n,r)\geq k+r+1$ if and only if $n<r(k+r)$.
    \end{itemize}
\begin{proof}
    Let $n,r,k\in\N$ such that $n\geq k+2r$. We start the proof by showing that
    \begin{equation}\label{eq:2}
        \kdom{k}(n,r) \leq k+r 	\Rightarrow n \geq r \cdot (k+r).
    \end{equation}
     
    Thus, suppose that $\kdom{k}(n,r) \leq k+r$  and let $D$ be an arbitrary $k$-dominating set of $\Kneser{n}{r}$ of cardinality $|D|=k+r$ (note that such a set exists, since $n\geq k+2r$, and thus we can obtain $D$ by adding $k+r-\kdom{k}(n,r)$ vertices to a $\kdom{k}(n,k)$-set).
    We will show that $D$ is a clique or, equivalently, that the vertices in $D$ are pairwise disjoint. Assume $D$ contains two adjacent vertices $u$ and $v$. Hence there exists $j \in [n]$ such that $j \in u \cap v$ and consequently $i_j(D) \geq 2$. Let $a\in [n]$ such that $i_a=\max\{i_x:x\in [n]\}$. Thus, $i_a\geq i_j \geq 2$.
    Let us see that $i_a=2$. Otherwise, let $u_1,u_2,u_3\in D$ such that $a\in u_1\cap u_2\cap u_3$, and $u_4,\ldots,u_{r+1}\in D\setminus\{u_1,u_2,u_3\}$. Consider
    \begin{align*}
        b_1&=a\in u_1\cap u_2\cap u_3,\\
        b_i&\in u_{i+2}\setminus\{b_1,\ldots,b_{i-1}\},\ \text{for }i\in[2..r-1],
    \end{align*}
    and let $b=\{b_1,\ldots,b_{r-1}\}$. Observe that there is at least one element $x\in[n]\setminus b$ such that $b\cup\{x\}\notin D$, since $|[n]\setminus b|-|D|=(n-r+1)-(k+r)=n-(k+2r)+1\geq 1$. Let $w=b\cup\{x\}$. We have $w\notin D$ and $w\cap u_i\neq\emptyset$ for every $i\in[r+1]$ which cannot be true because of Remark \ref{rem: condition kdom}. Thus, $i_a=2$. Let $u_1$ and $u_2$ be the two vertices of $D$ that contain the element $a$.
    
    If $r>2$, let $u_3,\ldots,u_{r+1}\in D\setminus\{u_1,u_2\}$. Consider
    \begin{align*}
        b_1&=a\in u_1\cap u_2,\\
        b_i&\in u_{i+1}\setminus\{b_1,\ldots,b_{i-1}\},\ \text{for }i\in[2..r],
    \end{align*}
    and let $w=\{b_1,\ldots,b_{r}\}$. We have $w\cap u_i\neq\emptyset$ for every $i\in[r+1]$. By Remark \ref{rem: condition kdom}, it follows that $w\in D$ and as $a\in w$, then $w$ is either $u_1$ or $u_2$. Without loss of generality, $w=u_1$. Note that $b_i\in u_1\cap u_{i+1}$ for every $i$, and since $i_a=\max\{i_x:x\in[n]\}=2$, $b_i\notin u_j$ for every $j$ different from $1$ and $i+1$.
    Let us consider $x\in u_3\setminus u_1$, and let $w'=w\setminus\{b_2\}\cup\{x\}$. We have $x\in w'\setminus u_1$ and $b_r\in w'\setminus u_2$. Thus, $a\in w'$ and $w'\notin \{u_1,u_2\}$ and therefore $w'\notin D$ but $w'\cap u_i\neq \emptyset$ for every $i\in[r+1]$, arising to a contradiction with Remark~\ref{rem: condition kdom}.

    If $r=2$, we have $u_1=\{a,b\}$ and $u_2=\{a,c\}$ for some $b,c\in [n]$. Since $|D|=k+r\geq 4$, then we can choose $u_3\in D$ $u_3\neq \{b,c\}$. Let $x\in u_3\setminus\{b,c\}$. Since $i_a=2$, $w=\{a,x\}\notin D$ and $w\cap u_i\neq\emptyset$ for $i\in[3]$, contradicting the fact that $D$ is $k$-dominating.

    Therefore, if $\kdom{k}(n,r) \leq k+r$ and $D$ is a $k$-dominating set of $\Kneser{n}{r}$ of cardinality $k+r$, then the vertices in $D$ are pairwise disjoint and as a consequence, $n\geq r|D|=r(k+r)$. In particular, if $\kdom{k}(n,r)=k+r$, then $n\geq r(k+r)$.

    \medskip

    Conversely, suppose that $n\geq r(k+r)$. We have $\kdom{k}(n,r)\leq\ktuple{k}(n,r)=k+r$. Suppose that $\kdom{k}(n,r)<k+r$ and let $D$ be a $k$-dominating set of cardinality $|D|=k+r-1$. Let $u_1,\ldots,u_r\in D$. Consider
    \begin{align*}
        b_1&\in u_1,\\
        b_i&\in u_{i}\setminus\{b_1,\ldots,b_{i-1}\},\ \text{for }i\in[2..r],
    \end{align*}
    and let $w=\{b_1,\ldots,b_{r}\}$. Let us note that $w\cap u_i\neq \emptyset$ for every $i\in[r]$. So, by Remark \ref{rem: condition kdom} it follows that $w\in D$. Let us note that we may assume that $w=u_j$ for some $j\in [r]$ by changing our initial choice of the vertices $u_i$. Notice that there exists $x\in [n]\setminus\left(\bigcup_{u\in D} u\right)$, since $\left|[n]\setminus\left(\bigcup_{u\in D} u\right)\right|\geq n-r|D|=n-r(k+r-1)\geq r$. Let $w'=w\setminus\{b_j\}\cup\{x\}$. We have $w'\notin D$ due to our choice of $x$, and $w'\cap u_i\neq\emptyset$ for each $i\in [r]$ (note that $u_j \cap w'=\{b_1,\ldots , b_r\}\setminus \{b_j\}$), arising to a contradiction with Remark~\ref{rem: condition kdom}.

    Therefore, if $n\geq r(k+r)$, then $\kdom{k}(n,r)=k+r$.

    Suppose now that $\kdom{k}(n,r) \geq k+r+1$. Then the first statement of the theorem implies that $n<r(k+r)$. For the converse assume that $n<r(k+r)$. It follows from the first statement of this theorem, that $\kdom{k}(n,k) \neq k+r$. Suppose, to the contrary, that $\kdom{k}(n,r) < k+r$. Then it follows from Equation~\ref{eq:2} that $n \geq r(r+k)$, a contradiction.
\end{proof}
\end{theorem}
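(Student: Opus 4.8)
The plan is to reduce both bullets to one key implication,
\[
\kdom{k}(n,r)\le k+r \ \Longrightarrow\ n\ge r(k+r),
\]
together with the matching lower bound $\kdom{k}(n,r)\ge k+r$ whenever $n\ge r(k+r)$. Once the key implication is available, the second bullet is purely formal: its backward direction is exactly the contrapositive (from $n<r(k+r)$ we get $\kdom{k}(n,r)>k+r$, i.e.\ $\kdom{k}(n,r)\ge k+r+1$), while its forward direction, together with the ``only if'' part of the first bullet, is again the contrapositive. For the ``if'' direction of the first bullet I need only the two bounds on $\kdom{k}(n,r)$: the upper bound $\kdom{k}(n,r)\le k+r$ is immediate from the chain $\kdom{k}(n,r)\le\ktuple{k}(n,r)=k+r$ via \eqref{e:invariants} and Theorem~\ref{thm: ktuple large n} (applicable since $n\ge r(k+r)\ge 2r$). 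So the real content lies in the key implication, the lower bound, and the clique property.

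The single tool behind all three is Remark~\ref{rem: condition kdom}: a vertex $w\notin D$ meets at most $|D|-k$ members of $D$. For $|D|=k+r$ this cap equals $r$, and for $|D|=k+r-1$ it equals $r-1$. The uniform tactic is to construct an $r$-set $w\notin D$ meeting strictly more members of $D$ than the cap permits, contradicting the Remark. I build $w$ from a system of distinct representatives: I choose a handful of members $u_1,\dots,u_m\in D$ and pick distinct elements $b_i$, one from each $u_i$ (letting $b_1$ double as a shared element when several members overlap), so that $w$ intersects every $u_i$; the size hypotheses $n\ge k+2r$, respectively $n\ge r(k+r)$, always leave enough ``free'' elements of $[n]$ to complete $w$ to an $r$-set lying outside $D$.

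For the clique property, which also yields the key implication, I take a $k$-dominating $D$ with $|D|=k+r$, assume it is not a clique, and fix an element $a$ of maximum occurrence, so $i_a(D)\ge 2$. First I show $i_a(D)=2$: if three members shared $a$, the single representative $b_1=a$ covers all three at once, and an SDR over the remaining members produces a $w$ meeting $r+1$ members, exceeding the cap $r$. With $i_a(D)=2$ and $r>2$, the analogous transversal already meets $r+1$ members, so the Remark forces $w\in D$; identifying $w$ with one of the two members through $a$ and using that the maximality $i_a=2$ pins each representative into a single further member, I perturb $w$ by swapping one representative for an element of a third member, obtaining $w'\notin D$ that still meets $r+1$ members -- a contradiction. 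The case $r=2$ needs only a short direct variant. Pairwise disjointness then forces $n\ge r|D|=r(k+r)$.

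For the lower bound $\kdom{k}(n,r)\ge k+r$ when $n\ge r(k+r)$, I suppose a $k$-dominating $D$ with $|D|=k+r-1$. An SDR $w=\{b_1,\dots,b_r\}$ across $r$ members meets all $r$ of them, exceeding the cap $r-1$, so $w\in D$, say $w=u_j$. Since $n-r|D|\ge r$, there is an element $x\in[n]\setminus\bigcup_{u\in D}u$; replacing $b_j$ by $x$ gives $w'\notin D$ that still meets every chosen member (note $u_j\cap w'=\{b_1,\dots,b_r\}\setminus\{b_j\}\ne\emptyset$ as $r\ge 2$), again exceeding the cap. I expect the main obstacle to be the clique argument, and specifically arranging the perturbation in the $i_a(D)=2$, $r>2$ case so that $w'$ genuinely leaves $D$ while keeping all $r+1$ intersections intact; this is exactly where the maximality of $i_a$ must be exploited and where the combinatorial bookkeeping is most delicate.
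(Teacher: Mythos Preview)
Your proposal is correct and follows essentially the same approach as the paper's own proof: the same reduction to the key implication \eqref{eq:2}, the same use of Remark~\ref{rem: condition kdom} with the transversal (SDR) construction, the same analysis of the maximum-occurrence element $a$ (first ruling out $i_a\ge 3$, then handling $i_a=2$ with the $r>2$ versus $r=2$ split and the perturbation $w\mapsto w'$), and the same lower-bound argument via a free element $x\in[n]\setminus\bigcup_{u\in D}u$. The only cosmetic difference is that the paper explicitly notes one may relabel so that the forced $w\in D$ lies among the originally chosen $u_1,\dots,u_r$, which your ``say $w=u_j$'' implicitly assumes.
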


\begin{corollary}
    If $k\geq 2$ and $n\geq r(k+r)$, then
    $$\kdom{k}(n,r)=\ktuple{k}(n,r)=\ktuplet{k}(n,r)=k+r.$$
    Moreover, every $\kdom{k}$-set, $\ktuple{k}$-set and $\ktuplet{k}$-set of $K(n,r)$, where $n \geq r(k+r)$, is a clique.
\end{corollary}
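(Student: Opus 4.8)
The plan is to assemble this statement directly from the three exact-value results already established, using the inequality chain \eqref{e:invariants} to organize them. Fix $k\ge 2$ and $n\ge r(k+r)$. Since for $r\ge 2$ we have $r(k+r)-(k+2r)=k(r-1)+r(r-2)\ge 0$, the hypothesis $n\ge r(k+r)$ guarantees $n\ge k+2r$, so Theorem~\ref{thm: kdom for large n} applies and yields $\kdom{k}(n,r)=k+r$; likewise Theorem~\ref{thm: ktuple large n} gives $\ktuple{k}(n,r)=k+r$, and the theorem on the $k$-tuple total domination number gives $\ktuplet{k}(n,r)=k+r$. (Alternatively, once the two outer equalities are in hand, the middle one is forced by the squeeze $\kdom{k}(n,r)\le\ktuple{k}(n,r)\le\ktuplet{k}(n,r)$ from \eqref{e:invariants}.) This settles the displayed equalities.

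For the clique assertion I would treat the three invariants in turn. That every $\ktuplet{k}$-set is a clique is stated outright in the preceding theorem on the $k$-tuple total domination number, and that every $\kdom{k}$-set is a clique is part of Theorem~\ref{thm: kdom for large n}; both conclusions transfer verbatim. For $\ktuple{k}$-sets I would invoke the discussion following Theorem~\ref{thm: ktuple large n}, where it is recorded that every $\ktuple{k}$-set of $\Kneser{n}{r}$ with $n\ge r(k+r)$ is a clique except in the single exceptional case $k=1$, $r=2$; since here $k\ge 2$, that exception is excluded, so the clique conclusion holds for $\ktuple{k}$-sets as well.

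The only points requiring care — and the closest thing to an obstacle in an otherwise routine synthesis — are bookkeeping ones: verifying that $n\ge r(k+r)$ is strong enough to trigger the hypothesis $n\ge k+2r$ needed by Theorem~\ref{thm: kdom for large n} (valid once $r\ge 2$, the nondegenerate range for Kneser graphs), and noting that the $k\ge 2$ assumption is precisely what removes the $k=1,r=2$ exception in the $\ktuple{k}$ clique statement. No new combinatorial argument is needed; the corollary is a direct combination of the three theorems.
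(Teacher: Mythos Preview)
Your proposal is correct and matches the paper's approach: the corollary is stated in the paper without proof, as an immediate synthesis of Theorem~\ref{thm: ktuple large n}, the theorem on $\ktuplet{k}$, and Theorem~\ref{thm: kdom for large n} together with \eqref{e:invariants}, which is precisely what you do. Your explicit verification that $n\ge r(k+r)$ forces $n\ge k+2r$ (for $r\ge 2$) and your remark that $k\ge 2$ kills the $(k,r)=(1,2)$ exception are the only checks needed, and both are handled correctly.
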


\begin{proposition} If $k\geq 2$, then
    $$\kdom{k}(r(k+r)-1,r)=\ktuple{k}(r(k+r)-1,r)=\ktuplet{k}(r(k+r)-1,r)=k+r+1.$$
\begin{proof}
    Let $k\geq 2$ and $n=r(k+r)-1$. Since $n>k+2r$, from Theorem \ref{thm: kdom for large n} we have that $k+r+1\leq \kdom{k}(n,r)\leq \ktuple{k}(n,r)\leq \ktuplet{k}(n,r)$. We will give a $k$-tuple total dominating set $D$ of cardinality exactly $k+r+1$.
    
    It is enough to consider the set $D=A\cup B$, where
    \begin{align*}
        A&=\big\{[r],[r-1]\cup\{r+1\},[r..2r-1]\big\}\\
        B&=\big\{[2r..3r-1],[3r..4r-1],\ldots,[(k+r-1)r..(k+r)r-1]\big\}.
    \end{align*}
    We have that $A$ is an independent set of $|A|=3$ vertices, $B$ is a clique of $|B|=k+r-2$ vertices, and $a$ is adjacent to $b$ for each $a\in A$ and $b\in B$. Let $u\in V(\Kneser{n}{r})$.
    \begin{itemize}
        \item If $u\in A$, then $u$ is adjacent to every vertex in $B$, so $|N(u)\cap D|\geq k+r-2\geq k$.
        \item If $u\in B$, then $u$ is adjacent to every other vertex in $D$, so $|N(u)\cap D|\geq k+r> k$.
        \item If $u\notin D$ and $|u\cap [2r-1]|\geq 2$, then $|u\cap [2r..n]|\leq r-2$ and $u$ has at least $|B|-(r-2)=k$ neighbors in $B$. So, $|N(u)\cap D|\geq k$.
        \item If $u\notin D$ and $|u\cap [2r-1]|=1$, then $|u\cap [2r..n]|=r-1$ and $u$ has at least $|B|-(r-1)=k-1$ neighbors in $B$. On the other hand, since there is no element contained in every vertex of $A$, we have that $u$ has at least one neighbor in $A$. So, $|N(u)\cap D|=|N(u)\cap A|+|N(u)\cap B|\geq k$.
        \item Finally, if $u\notin D$ and $|u\cap [2r-1]|=0$, then $|u\cap [2r..n]|=r$ and $u$ has at least $|B|-r=k-2$ neighbors in $B$, and $u$ is adjacent to every vertex in $A$. So, $|N(u)\cap D|=|N(u)\cap A|+|N(u)\cap B|\geq k+1\geq k$.
    \end{itemize}

\end{proof}
\end{proposition}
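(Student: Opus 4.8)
The plan is to get the lower bound essentially for free from the earlier results and then match it with a single explicit construction. Since $n=r(k+r)-1<r(k+r)$ and (for $r\ge 2$, $k\ge 2$) we have $n\ge k+2r$, Theorem~\ref{thm: kdom for large n} gives $\kdom{k}(n,r)\ge k+r+1$. Plugging this into the chain \eqref{e:invariants} yields
$k+r+1\le \kdom{k}(n,r)\le \ktuple{k}(n,r)\le \ktuplet{k}(n,r)$,
so all three invariants are at least $k+r+1$. It therefore suffices to exhibit one $k$-tuple total dominating set of size $k+r+1$: such a set is simultaneously $k$-dominating and $k$-tuple dominating, so it caps all three invariants from above at $k+r+1$ at once. (We may assume $r\ge 2$, since for $r=1$ the graph $K(n,1)$ is complete and $\ktuplet{k}$ is undefined in this range.)

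For the construction I would split $[n]$ into a bottom block $[2r-1]$ and a top block $[2r..n]$. The top has exactly $n-(2r-1)=r(k+r-2)$ elements, so it partitions cleanly into $k+r-2$ consecutive disjoint $r$-sets; let $B$ be this family. Being pairwise disjoint, $B$ is a clique, and a vertex $u$ with $j$ of its elements in the bottom meets at most $r-j$ of these blocks, hence is adjacent to at least $|B|-(r-j)=k-2+j$ of them. This already handles every $u$ with $j\ge 2$. To repair the shortfall for $j\in\{0,1\}$ I would add a gadget $A$ of three $r$-sets living entirely inside $[2r-1]$ and chosen so that no single element of $[2r-1]$ lies in all three of them, for instance $[r]$, $([r-1]\cup\{r+1\})$, and $[r..2r-1]$. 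The total size is $|A|+|B|=3+(k+r-2)=k+r+1$, as required.

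The verification is then a short case count on $j=|u\cap[2r-1]|$, and the two design features of $A$ are exactly what make the boundary cases work. When $j=0$, the vertex $u$ lies entirely in the top, so it is disjoint from (hence adjacent to) all three members of $A$, adding $3$ to the at least $k-2$ neighbours from $B$. When $j=1$, the unique bottom element of $u$ cannot belong to all of $A$ by the no-common-element property, so at least one member of $A$ is disjoint from $u$, adding at least $1$ to the at least $k-1$ neighbours from $B$. Vertices $u\in D$ are immediate: each vertex of $A$ is adjacent to all of $B$ (at least $k+r-2\ge k$ neighbours) and each vertex of $B$ is adjacent to every other vertex of $D$.

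I expect the only genuinely non-routine step to be the discovery of the gadget $A$. The counting shows that a pure clique of pairwise-disjoint $r$-blocks falls exactly one neighbour short on the vertices nearly disjoint from it, and one must patch the $j=0$ and $j=1$ strata simultaneously while spending as few extra vertices as possible. The two requirements — that $A$ sit inside $[2r-1]$ (so the $j=0$ case is covered automatically) and that $A$ have no universal element (so the $j=1$ case is covered) — force the shape of $A$ and show that three vertices suffice; everything else is bookkeeping built on the exact identity $n-(2r-1)=r(k+r-2)$.
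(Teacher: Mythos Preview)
Your proposal is correct and follows essentially the same approach as the paper: the same lower bound via Theorem~\ref{thm: kdom for large n} and \eqref{e:invariants}, the same explicit construction $D=A\cup B$ with $A=\{[r],[r-1]\cup\{r+1\},[r..2r-1]\}$ and $B$ the $k+r-2$ consecutive $r$-blocks in $[2r..n]$, and the same case split on $j=|u\cap[2r-1]|$. Your remark that $r\ge2$ is needed (so that $n>k+2r$ holds and $\ktuplet{k}$ is defined) is a point the paper leaves implicit.
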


Combining the results of this section with results from~\cite{cornet2023k}, we obtain the table of values of the three types of $k$-domination invariants of $K(n,r)$ in the case $k=2$ and $r=2$; see Table~\ref{tab: K(n,2)}.

\begin{table}[H]
    \centering
    \begin{tabular}{|c|c|c|c|c|} \hline
        $n$ & $\kdom{2}(n,2)$ & $\ktuple{2}(n,2)$ & $\ktuplet{2}(n,2)$ \\ \hline
        $4$ & $6$ & $6$ & $\lnot\exists$ \\
        $5$ & $4$ & $6$ & $8$ \\
        $6$ & $5$ & $6$ & $6$ \\
        $7$ & $5$ & $5$ & $5$ \\
        $\geq 8$ & $4$ & $4$ & $4$ \\ \hline
    \end{tabular}
    \caption{Domination invariants with $k=2$ for $\Kneser{n}{2}$.}
    \label{tab: K(n,2)}
\end{table}

\section{2-packing number}
\label{sec:packing}

The $2$-packing number of Kneser graphs was considered in~\cite{cornet2023k}. Note that ${\rm diam}(K(n,r))=2$ as soon as $n\ge 3r-1$, and in such cases $\rho_2(K(n,r))=1$; see~\cite{valencia2005diameter}. In~\cite{cornet2023k}, the authors studied the case, which is in a sense the closest to diameter $2$ Kneser graphs, and this is when $n=3r-2$. They obtained the exact values of the $2$-packing number for all these Kneser graphs. In this paper, we consider the next case, which is when $n=3r-3$. We again simplify the notation by writing $\rho_2(n,r)$ instead of $\rho_2(K(n,r))$.

We start by recalling a useful observation from~\cite{cornet2023k}, which follows from the fact that a set $S$ is a $2$-packing of a graph $G$ if and only if no two vertices of $S$ are adjacent nor have a common neighbor. 

\begin{observation}
\label{obs:pack}
Let $2r+1\le n\le 3r-2$. A set $S$ is a $2$-packing in $K(n,r)$ if and only if for every pair $u,v\in S$ we have $1\le |u\cap v|\le (3r-1)-n$.
\end{observation}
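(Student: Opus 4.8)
The plan is to translate the two defining conditions of a $2$-packing — that no two vertices of $S$ are adjacent, and that no two vertices of $S$ share a common neighbor — directly into conditions on the intersection size $|u\cap v|$, exploiting the defining adjacency rule of the Kneser graph. Since $S$ is a $2$-packing if and only if every pair of distinct vertices of $S$ is at distance at least $3$, and two vertices are at distance at most $2$ exactly when they are equal, adjacent, or joined by a common neighbor, it suffices to characterize these last two situations.

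First I would record the immediate observation that in $\Kneser{n}{r}$ two distinct vertices $u,v$ are adjacent precisely when $u\cap v=\emptyset$. Hence the non-adjacency requirement for a pair $u,v\in S$ is exactly $|u\cap v|\ge 1$, which supplies the lower bound in the claimed inequality.

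The crux is to characterize when $u$ and $v$ admit a common neighbor. A common neighbor is an $r$-subset $w\subseteq[n]$ disjoint from both $u$ and $v$, i.e. $w\subseteq[n]\setminus(u\cup v)$. Such an $r$-set exists if and only if $|[n]\setminus(u\cup v)|\ge r$, that is $n-|u\cup v|\ge r$. By inclusion–exclusion $|u\cup v|=2r-|u\cap v|$, so this is equivalent to $|u\cap v|\ge 3r-n$. Negating, $u$ and $v$ have \emph{no} common neighbor if and only if $|u\cap v|\le (3r-1)-n$, which is the upper bound.

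Finally I would combine the two conditions: $S$ is a $2$-packing if and only if every pair $u,v\in S$ satisfies both $|u\cap v|\ge 1$ and $|u\cap v|\le (3r-1)-n$, which is precisely the asserted double inequality. The hypothesis $n\le 3r-2$ guarantees $(3r-1)-n\ge 1$, so the two constraints are jointly satisfiable, while $n\ge 2r+1$ keeps us in the connected regime where this distance-based reading of a $2$-packing is the intended one. I do not expect a genuine obstacle here; the only point deserving an explicit line is the elementary counting fact that a ground set of size $m$ contains an $r$-subset exactly when $m\ge r$, which is what makes the existence-of-a-common-neighbor step an equivalence rather than merely an implication.
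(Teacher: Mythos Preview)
Your argument is correct and is exactly the approach the paper indicates: the observation is stated there without a full proof, merely noting that it ``follows from the fact that a set $S$ is a $2$-packing of a graph $G$ if and only if no two vertices of $S$ are adjacent nor have a common neighbor,'' and your proposal simply unpacks this one-line justification via the inclusion--exclusion computation $|u\cup v|=2r-|u\cap v|$.
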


In the case of $K(3r-3,r)$, Observation~\ref{obs:pack} yields that $S$ is a $2$-packing in $K(n,r)$ if and only if for every pair $u,v\in S$ we have $$1\le |u\cap v|\le 2.$$
We obtain lower bounds for $\rho_2(3r-3,r)$ when $r\le 9$, and exact values for $\rho_2(3r-3,r)$ when $r>9$. See Table~\ref{tab:2pack}. 
The lower bounds in Table~\ref{tab:2pack} follow from constructions of the corresponding $2$-packings, which we present in Table~\ref{tab:2pack-values}.

\begin{table}[H]
    \centering
    \begin{tabular}{|c|c|} \hline
        $r$ & $\rho_2(3r-3,r)$ \\ \hline
        $4$ & $\ge 12$ \\
        $5$ & $\ge 12$\\
        $6$ & $\ge 10$  \\
        $7$ & $\ge 6$ \\
        $8$ & $\ge 5$ \\
        $9$ & $=4$ \\
        $\ge 10$ & $=3$ \\         \hline
    \end{tabular}
    \caption{$2$-packing numbers of $\Kneser{3r-3}{r}$.}
    \label{tab:2pack}
\end{table}

Consider the Kneser graph $K(3r-3,r)$, where $r\ge 2$. Since a set $S$ of three vertices $u,v,w \in V(K(3r-3,r))$ with $|u \cap v \cap w|=|u\cap v|=|u \cap w|=|v \cap w|=2$ is a 2-packing (say $u=[1..r], v=\{1,2\} \cup [r+1..2r-2], w=\{1,2\} \cup [2r-1..3r-4]$), we infer $\rho_2(3r-3,r)\ge 3$. The next result proves the last line of Table~\ref{tab:2pack}.
\begin{proposition}
\label{prp:3r-3}
If $r\ge 10$, then $\rho_2(3r-3,r)=3$.    
\end{proposition}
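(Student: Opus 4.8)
The lower bound $\rho_2(3r-3,r)\ge 3$ is already in hand from the explicit three-vertex packing given just before the statement, so the entire task is the matching upper bound, that is, to rule out a $2$-packing of size $4$. The plan is a double-counting argument on element occurrences. I would suppose, for a contradiction, that $S=\{u_1,u_2,u_3,u_4\}$ is a $2$-packing in $K(3r-3,r)$. By Observation~\ref{obs:pack} specialized to $n=3r-3$, each of the $\binom{4}{2}=6$ pairwise intersections $u_i\cap u_j$ has size $1$ or $2$; in particular every one of them is at most $2$.

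First I would translate the two governing quantities into the occurrence notation. On one hand, $\sum_{x\in[n]} i_x(S)=r|S|=4r$. On the other hand, counting incidences of pairs through common elements gives
$$\sum_{x\in[n]}\binom{i_x(S)}{2}=\sum_{1\le i<j\le 4}|u_i\cap u_j|\le 6\cdot 2=12.$$
Writing $n_a=|X_a(S)|$ for $a\in\{1,2,3,4\}$ (note that $i_x(S)\le 4$ always), these two facts read $n_1+2n_2+3n_3+4n_4=4r$ and $n_2+3n_3+6n_4\le 12$.

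The remaining ingredient is the scarcity of ground-set elements: the number of distinct elements appearing in $S$ is $|X_1^{\ge}(S)|=n_1+n_2+n_3+n_4\le n=3r-3$. Subtracting this from $\sum_{x} i_x(S)=4r$ then yields
$$n_2+2n_3+3n_4 \;=\; 4r-\bigl(n_1+n_2+n_3+n_4\bigr)\;\ge\; 4r-(3r-3)\;=\;r+3.$$
Comparing the two displayed inequalities and using $2n_3\le 3n_3$ and $3n_4\le 6n_4$ gives $r+3\le n_2+2n_3+3n_4\le n_2+3n_3+6n_4\le 12$, hence $r\le 9$, contradicting $r\ge 10$. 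Thus no $2$-packing of size $4$ exists, and together with the lower bound this gives $\rho_2(3r-3,r)=3$.

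I do not anticipate a genuine obstacle here; the argument is a clean clash of two linear bounds. The only point needing care is making the inequalities line up in the right direction—the upper bound $n_2+3n_3+6n_4\le 12$ must dominate the lower bound $n_2+2n_3+3n_4\ge r+3$—which is precisely why the hypothesis $r\ge 10$ (equivalently $r+3>12$) is the exact threshold, matching the fact recorded in Table~\ref{tab:2pack} that for $r=9$ a packing of size $4$ does exist.
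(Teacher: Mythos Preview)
Your argument is correct. It differs from the paper's direct proof, which is asymmetric: the paper fixes three sets $u_1,u_2,u_3$, uses $|u_i\cap u_j|\le 2$ and inclusion--exclusion to bound $|[3r-3]\setminus(u_1\cup u_2\cup u_3)|\le 3$, and then observes that a fourth packing vertex $z$ can pick up at most $2$ elements from each $u_i$ and at most $3$ from the complement, forcing $|z|\le 9$. Your symmetric double-count via $\sum_x i_x(S)$ and $\sum_x\binom{i_x(S)}{2}$ reaches the identical threshold $r\le 9$ with the same tightness. The paper's version is slightly shorter and needs no notation beyond set sizes; your version is closer in spirit to the machinery the paper deploys later in Lemmas~\ref{lem: packing of size 4}--\ref{lem: packing of size 5} and Theorem~\ref{thm: packing 3 and 4} (indeed the paper notes that Theorem~\ref{thm: packing 3 and 4} yields an alternative proof of this proposition), but you improve on that route by handling all occurrence multiplicities at once rather than first reducing to $i_x\le 2$.
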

\begin{proof}
   Let $P$ be a $2$-packing of $K(3r-3,r)$, where $r\ge 10$, and let $\{u_1,u_2,u_3\}\subseteq P$. Since $|u_i\cap u_j|\le 2$ for all $\{i,j\}\subset [3]$, we infer that $|[3r-3]\setminus (u_1\cup u_2\cup u_3)|\le 3$. Suppose that $|P|\ge 4$, and let $z\in P$ be distinct from all $u_i$. Since $z\in P$, $|z\cap u_1|\le 2$, $|z\cap u_2|\le 2$, $|z\cap u_3|\le 2$, and $|z\cap \left([3r-3]\setminus (u_1\cup u_2\cup u_3)\right)|\le 3$. We derive that $|z|\le 2+2+2+3=9$, which is a contradiction, since $r\ge 10$. Hence, $|P|=3$.
\end{proof}

\begin{table}[H]
    \centering
    \begin{tabularx}{\linewidth}{|c|C|} \hline
        $r$ & $2$-packing of $K(3r-3,r)$ \\ \hline
        $4$ & (1, 2, 3, 5), (1, 2, 6, 9), (1, 2, 7, 8),  (1, 3, 4, 6), (1, 4, 5, 8), (1, 4, 7, 9),(2, 3, 4, 7), \\ &(2, 4, 5, 6), (2, 4, 8, 9), (3, 5, 7, 9), (3, 6, 8, 9), (5, 6, 7, 8)\\
            \hline
        $5$ &(1, 2, 3, 4, 8), (1, 2, 5, 10, 11), (1, 2, 6, 9, 12), (1, 3, 7, 9, 10), (1, 4, 5, 7, 12), \\ &(1, 6, 7, 8, 11), (2, 3, 5, 6, 7), (2, 4, 7, 9, 11), (2, 7, 8, 10, 12), (3, 4, 6, 10, 11), \\ &(3, 5, 9, 11, 12), {(4, 5, 6, 8, 9)} \\ \hline
        $6$ &  (1, 2, 3, 4, 5, 11), (1, 2, 7, 8, 9, 14), (1, 3, 6, 9, 10, 15), (1, 5, 6, 12, 13, 14), \\ &(2, 4, 6, 7, 13, 15), (2, 5, 8, 10, 12, 15), (3, 4, 8, 10, 13, 14), (3, 6, 7, 8, 11, 12), \\ &(4, 9, 11, 12, 14, 15), (5, 7, 9, 10, 11, 13)\\ \hline
        $7$ &  (1, 2, 3, 4, 6, 11, 14), (1, 3, 5, 8, 16, 17, 18), (2, 4, 5, 7, 12, 15, 17), \\ &(2, 8, 9, 12, 13, 14, 16), (3, 7, 8, 9, 10, 11, 15), (4, 5, 6, 9, 10, 13, 18),  \\ \hline
        $8$ & (1, 2, 3, 4, 5, 6, 9, 18), (1, 2, 7, 11, 12, 14, 20, 21), (3, 7, 8, 9, 13, 15, 16, 20), \\ &(4, 5, 10, 12, 13, 15, 17, 21), (5, 6, 8, 10, 11, 14, 16, 19) \\ \hline
    \end{tabularx}
    \caption{$2$-packings in $\Kneser{3r-3}{r}$, for $r\leq 8$.}
    \label{tab:2pack-values}
\end{table}

We suspect that the lower bounds in Table~\ref{tab:2pack} are in fact exact values of $\rho_2(3r-3,r)$, and leave this as an open problem. The proof for the penultimate line in Table~\ref{tab:2pack}, as well as an alternative proof of Proposition \ref{prp:3r-3}, are given afterwards in Theorem \ref{thm: packing 3 and 4}.

We continue with several general results concerning the behavior of the $2$-packing number in Kneser graphs.

\begin{proposition}\label{prp: 2packingIncreasing}
    If $n$, $r$ and $a$ are positive integers such that $n=2r+1$ and $a\geq 2$, then
    $$\packing(n+2a,r+a)\geq 2\packing(n,r).$$
\end{proposition}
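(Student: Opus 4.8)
The plan is to prove the inequality by an explicit doubling construction. We start with $n=2r+1$, so $K(n,r)=K(2r+1,r)$ is the odd Kneser graph. Let $S$ be a maximum $2$-packing of $K(2r+1,r)$, so $|S|=\packing(2r+1,r)$, and by Observation~\ref{obs:pack} with $n=2r+1$ we have $(3r-1)-n=r-2$, meaning every pair $u,v\in S$ satisfies $1\le|u\cap v|\le r-2$. The goal is to build, inside the larger ground set $[n+2a]=[2r+2a+1]$, a family of $2(r+a)$-subsets of size $\packing(2r+1,r)$ copies of $S$, glued together in disjoint coordinate blocks, and verify it is a $2$-packing of $K(2r+2a,r+a)$.

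First I would split the ground set $[2r+2a+1]$ into a block $A=[2r+1]$ (the original coordinates) and a block $B=[2r+2..2r+2a+1]$ of size $2a$. The idea is to take two ``shifted'' copies of $S$: informally, each vertex of the first copy will use its original $r$ elements from $A$ together with the first $a$ elements of $B$, and each vertex of the second copy will use its original $r$ elements from $A$ together with the last $a$ elements of $B$. Concretely, writing $B_1=[2r+2..2r+a+1]$ and $B_2=[2r+a+2..2r+2a+1]$ (so $|B_1|=|B_2|=a$ and $B_1\cap B_2=\emptyset$), I define $S^{(1)}=\{u\cup B_1:u\in S\}$ and $S^{(2)}=\{u\cup B_2:u\in S\}$, and set $S'=S^{(1)}\cup S^{(2)}$. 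Each element of $S'$ has size $r+a$, as required, and lives in $[2r+2a]\subset[2r+2a+1]$, so it is a vertex of $K(2r+2a,r+a)$. Since $B_1\cap B_2=\emptyset$, the two families are disjoint and $|S'|=2|S|=2\packing(2r+1,r)$.

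Next I would verify the $2$-packing condition for $K(2r+2a,r+a)$. Here $n'=2r+2a$, so by Observation~\ref{obs:pack} I need every pair in $S'$ to have intersection in $\{1,\ldots,(3(r+a)-1)-(2r+2a)\}=\{1,\ldots,r+a-1\}$. There are three types of pairs to check. For two vertices $u\cup B_1,v\cup B_1$ within $S^{(1)}$ (with $u\ne v$ in $S$), the intersection is $(u\cap v)\cup B_1$, of size $|u\cap v|+a$, which lies between $1+a$ and $(r-2)+a\le r+a-1$, so it is in range. The same holds symmetrically within $S^{(2)}$. For a cross pair $u\cup B_1,v\cup B_2$, the intersection is $(u\cap v)$ alone (the $B$-parts are disjoint), of size between $1$ and $r-2$, again in the required range, and notably at least $1$ even when $u=v$ because every $u\in S$ has $u\cap u=u$ of size $r\ge 1$. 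This last point is the one subtlety worth flagging, and it is where the hypothesis $a\ge 2$ enters: I must make sure the cross intersections never vanish (they do not, since they equal $u\cap v$ and $S$ is a $2$-packing forcing $|u\cap v|\ge 1$), and that the within-block intersections $|u\cap v|+a$ do not exceed $r+a-1$, which uses $|u\cap v|\le r-2$.

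The main obstacle, and the only place real care is needed, is confirming that the two blocks $B_1,B_2$ each have size exactly $a$ and fit into the enlarged ground set without overlap, while simultaneously the saturated upper bound $r+a-1$ of the new packing condition accommodates the worst case $|u\cap v|+a$. The role of $a\ge 2$ should be examined: with $a\ge 1$ the construction already produces disjoint blocks $B_1,B_2$ of positive size, but $a\ge 2$ may be needed to guarantee strict inequalities or to ensure $|S'|=2|S|$ genuinely exceeds trivial packings; I would double-check whether the distinctness of the $2|S|$ vertices and the bound $r+a-1\ge a+1$ (equivalently $r\ge 2$) truly require $a\ge 2$ or merely $a\ge 1$, and state the construction so that the hypothesis is used honestly. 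Once the three cases above are verified, $S'$ is a valid $2$-packing of $K(2r+2a,r+a)$ of size $2\packing(2r+1,r)$, giving $\packing(2r+2a,r+a)=\packing(n+2a,r+a)\ge 2\packing(n,r)$ as claimed.
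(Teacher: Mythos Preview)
Your construction is exactly the paper's, but there is an arithmetic slip that leaves you unable to locate where the hypothesis $a\ge 2$ is used. Since $n=2r+1$, we have $n+2a=2r+2a+1=2(r+a)+1$, not $2r+2a$; in particular your block $B_2=[2r+a+2..2r+2a+1]$ already contains the element $2r+2a+1$, so the vertices of $S^{(2)}$ do \emph{not} live in $[2r+2a]$, and the target graph is $K(2(r+a)+1,r+a)$, again an odd graph. Applying Observation~\ref{obs:pack} with $n'=2(r+a)+1$ and $r'=r+a$ gives the correct upper bound $(3(r+a)-1)-(2(r+a)+1)=r+a-2$, not $r+a-1$. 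With this bound the within-block check becomes tight ($|u\cap v|+a\le (r-2)+a=r+a-2$), and the cross pair with $u=v$ has intersection $u$ of size $r$, which must satisfy $r\le r+a-2$; this is precisely the condition $a\ge 2$, resolving your uncertainty about where the hypothesis enters. Once this is fixed the argument is complete and identical to the paper's proof.
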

    
    \begin{proof}
        Let $S$ be a $2$-packing of $K(n,r)$ with $|S|=\rho_2(n,r)$. Consider the following set: 
        $$T=\Big\{v\cup[(n+1)..(n+a)],v\cup[(n+a+1)..(n+2a)]:\,v\in S\Big\}.$$ 
        Clearly, $T\subset V(K(n+2a,r+a))$ and $|T|=2\rho_2(n,r)$. Note that $n+2a=2r+1+2a=2(r+a)+1$. Thus, to prove that $T$ is a $2$-packing in $K(n+2a,r+a)$, we need to prove, by Observation~\ref{obs:pack}, that 
        $$1\le |u\cap v|\le (3(r+a)-1)-(n+2a)=3r-1-n+a=r+a-2$$
        for any two vertices $u,v\in T$. Let $u,v\in T$, and let $\tilde{u}=u\cap [n]$, $\tilde{v}=v\cap [n]$. If $\tilde{u}\neq\tilde{v}$, then as $\tilde{u},\tilde{v}\in S$, by Observation~\ref{obs:pack}, we have $1\le |\tilde{u}\cap \tilde{v}|\le (3r-1)-n=r-2$, and since $|\tilde{u}\cap \tilde{v}|\leq |u\cap v|\leq |\tilde{u}\cap \tilde{v}|+a$, we have $1\le|u\cap v|\le r+a-2$. On the other hand, if $\tilde{u}=\tilde{v}$, then $|u\cap v|=r$ and it follows $1\leq|u\cap v|=r\le r+a-2$ since $a\ge 2$.
    \end{proof}

By applying Proposition~\ref{prp: 2packingIncreasing} several times (note that if $K(n,r)$ is an odd graph, then $K(n+2a,r+a)$ is also an odd graph) we get the following.

\begin{corollary}
    If $n$, $r$ and $a$ are positive integers such that $n=2r+1$ and $a\geq 2$, then
    $$\packing(n+2a,r+a)\geq 2^{\left\lfloor\frac{a}{2}\right\rfloor}\packing(n,r).$$
\end{corollary}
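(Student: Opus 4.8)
The plan is to iterate Proposition~\ref{prp: 2packingIncreasing}, letting each application contribute one factor of $2$, and to choose the step sizes so as to extract exactly $\lfloor a/2\rfloor$ applications. The essential structural point, already flagged in the parenthetical remark preceding the corollary, is that the hypothesis $n=2r+1$ (the odd-graph condition) is self-reproducing: if $n_{i-1}=2r_{i-1}+1$ and one sets $r_i=r_{i-1}+a_i$, $n_i=n_{i-1}+2a_i$, then $n_i=2(r_{i-1}+a_i)+1=2r_i+1$ as well. Hence the proposition can be invoked again with the same hypothesis in place, and the resulting inequalities chain.

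First I would decompose $a$ as a sum $a=a_1+\cdots+a_m$ of positive integers each at least $2$, with the number of summands $m$ as large as possible. The maximum such $m$ equals $\lfloor a/2\rfloor$: when $a$ is even, take all summands equal to $2$, giving $m=a/2$; when $a$ is odd, take $(a-3)/2$ summands equal to $2$ together with one summand equal to $3$, giving $m=(a-1)/2$. In both cases $m=\lfloor a/2\rfloor$, and since $a\ge 2$ such a decomposition exists.

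Next I would set $r_0=r$, $n_0=n=2r+1$, and recursively $r_i=r_{i-1}+a_i$ and $n_i=n_{i-1}+2a_i$ for $i\in[m]$, so that $n_i=2r_i+1$ holds at every stage and $(n_m,r_m)=(n+2a,r+a)$. For each $i\in[m]$, the pair $(n_{i-1},r_{i-1})$ satisfies $n_{i-1}=2r_{i-1}+1$ and the increment satisfies $a_i\ge 2$, so Proposition~\ref{prp: 2packingIncreasing} applies and yields
$$\packing(n_i,r_i)=\packing(n_{i-1}+2a_i,r_{i-1}+a_i)\ge 2\,\packing(n_{i-1},r_{i-1}).$$
Composing these $m$ inequalities gives $\packing(n_m,r_m)\ge 2^m\,\packing(n_0,r_0)$, that is, $\packing(n+2a,r+a)\ge 2^{\lfloor a/2\rfloor}\packing(n,r)$, as required.

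Since every step is a direct invocation of the already-established proposition, there is no serious obstacle here; the only points requiring care are the bookkeeping that keeps the odd-graph hypothesis valid at each stage and the elementary count confirming that $\lfloor a/2\rfloor$ is the largest number of parts of size at least $2$ in a partition of $a$. I would verify the boundary cases $a=2$ (where $m=1$ and the corollary reduces to the proposition itself) and $a=3$ (where the single summand $3$ is used) to confirm the decomposition behaves as claimed.
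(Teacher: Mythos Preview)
Your proof is correct and follows exactly the approach the paper intends: iterate Proposition~\ref{prp: 2packingIncreasing}, using the fact that the odd-graph hypothesis $n=2r+1$ is preserved under each step, and partition $a$ into $\lfloor a/2\rfloor$ summands each at least~$2$ so as to maximize the number of applications. The paper's own justification is just the one-line remark ``by applying Proposition~\ref{prp: 2packingIncreasing} several times,'' and your write-up is simply a careful unpacking of that.
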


We follow with a version of monotonicity of the $2$-packing number in Kneser graphs, which turns out to be non-decreasing when both $n$ and $r$ increase by the same value. 

\begin{theorem}
     If $n$ and $r$ are positive integers such that $n\geq 2r+2$, then $$\packing(n+1,r+1)\geq \packing(n,r).$$
\end{theorem}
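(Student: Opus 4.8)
The plan is to lift a maximum $2$-packing of $\Kneser{n}{r}$ to a $2$-packing of $\Kneser{n+1}{r+1}$ of the same size by adjoining the new element $n+1$ to each of its vertices. Concretely, I would let $S$ be a $2$-packing of $\Kneser{n}{r}$ with $|S|=\packing(n,r)$, and set
$$T=\{\,v\cup\{n+1\} : v\in S\,\}\subseteq V(\Kneser{n+1}{r+1}).$$
Since distinct $r$-subsets of $[n]$ yield distinct $(r+1)$-subsets of $[n+1]$ after adjoining $n+1$, the map $v\mapsto v\cup\{n+1\}$ is injective, so $|T|=|S|=\packing(n,r)$. It then remains only to show that $T$ is a $2$-packing of $\Kneser{n+1}{r+1}$, since this immediately gives $\packing(n+1,r+1)\ge |T|=\packing(n,r)$.

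To verify that $T$ is a $2$-packing I would use the characterization (recalled just before Observation~\ref{obs:pack}) that a set is a $2$-packing precisely when no two of its vertices are adjacent and no two of its vertices have a common neighbor. Take distinct $u'=u\cup\{n+1\}$ and $v'=v\cup\{n+1\}$ in $T$, with $u,v\in S$. First, both $u'$ and $v'$ contain $n+1$, hence $u'\cap v'\neq\emptyset$, so they are not adjacent in $\Kneser{n+1}{r+1}$. Second, suppose $w$ were a common neighbor, that is, an $(r+1)$-subset of $[n+1]$ disjoint from both $u'$ and $v'$. Since $n+1\in u'$, we must have $n+1\notin w$, so $w\subseteq[n]$ and $w$ is disjoint from both $u$ and $v$; in particular $|[n]\setminus(u\cup v)|\ge |w|=r+1\ge r$. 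But then $[n]\setminus(u\cup v)$ contains an $r$-subset, which would be a common neighbor of $u$ and $v$ in $\Kneser{n}{r}$, contradicting the assumption that $S$ is a $2$-packing of $\Kneser{n}{r}$. Hence no such $w$ exists, and $T$ is a $2$-packing.

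I do not expect a substantial obstacle here: the argument is a direct construction, and the delicate points are only bookkeeping. One must check that $\Kneser{n+1}{r+1}$ is a legitimate Kneser graph, which holds because $n\ge 2r+2$ gives $n+1\ge 2(r+1)+1$; and one must route the ``no common neighbor'' step through the definition of a $2$-packing rather than through Observation~\ref{obs:pack}, whose hypothesis $2r+1\le n\le 3r-2$ does not cover the full range needed for $\Kneser{n+1}{r+1}$. (For $n\ge 3r-1$ the statement is anyway trivial, since $\packing(n,r)=1$.) The structural reason the construction works is that fixing the coordinate $n+1$ in every lifted vertex forces all pairs to intersect, so the adjacency condition is automatic and only the common-neighbor condition needs genuine input, which is exactly the packing property already available in $\Kneser{n}{r}$.
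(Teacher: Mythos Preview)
Your proof is correct and uses exactly the same construction as the paper: lift a maximum $2$-packing $S$ of $\Kneser{n}{r}$ to $T=\{v\cup\{n+1\}:v\in S\}$ in $\Kneser{n+1}{r+1}$. The only difference is in the verification that $T$ is a $2$-packing: the paper first disposes of the case $n\ge 3r-1$ (where $\packing(n,r)=1$), then for $2r+2\le n\le 3r-2$ applies Observation~\ref{obs:pack} to both graphs and checks the intersection bounds via $|u\cap v|=|\tilde u\cap\tilde v|+1$. Your argument instead works directly from the definition (non-adjacency is automatic from the shared element $n+1$; a common neighbor would yield one in $\Kneser{n}{r}$), which avoids the case split and the reliance on Observation~\ref{obs:pack}. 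This is a cosmetic difference, not a different route.
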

   
    \begin{proof}
        Note that if $n\geq 3r-1$, then $\packing(n,r)=1$ and the result is straightforward. Thus, let us assume $2r+2\leq n\leq 3r-2$. Let $S$ be a $2$-packing of $\Kneser{n}{r}$ of size $\packing(n,r)$. Consider the set:
        $$T=\Big\{v\cup\{n+1\}:\,v\in S\Big\}.$$ 
        We have $T\subset V(K(n+1,r+1))$ and $|T|=\packing(n,r)$. Let us prove that $T$ is a 2-packing of $\Kneser{n+1}{r+1}$. In order to do so, since $2(r+1)+1\leq n+1\leq 3(r+1)-2$, by Observation \ref{obs:pack} it is enough to see that $1\leq |u\cap v|\leq (3(r+1)-1)-(n+1)=3r+1-n$ for any pair of vertices in $T$. In fact, let $u,v\in T$, and let $\tilde{u}=u\cap[n]$, $\tilde{v}=v\cap[n]$. Since $\tilde{u},\tilde{v}\in S$, we have $1\leq |\tilde{u}\cap\tilde{v}|\leq 3r-1-n$. As $|u\cap v|=|\tilde{u}\cap \tilde{v}|+1$, the result follows.
    \end{proof}


\begin{lemma}\label{lem: packing of size 4}
    Let $r$, and $t$ be positive integers such that $r\geq 3$ and $2\leq t\leq \frac{r+3}{3}$. If $\packing(3r-t,r)\geq 4$, then there exists a $2$-packing $S$ of $\Kneser{3r-t}{r}$ with $|S|=4$ and $i_x(S)\leq 2$ for every $x\in[n]$.
    \begin{proof}
        Let $n=3r-t$ and let $S$ be a $2$-packing of $\Kneser{n}{r}$ with cardinality $|S|=4$. Note that since $r\geq 3$, we have $\frac{r+3}{3}\leq r-1$. Thus $2\leq t\leq r-1$ and it follows that $2r+1\leq n\leq 3r-2$. So, by Observation \ref{obs:pack}, we have that every pair of vertices in $S$ intersect in at most $3r-1-n=t-1$ elements. Let us assume $X_3^\geq(S) \neq\emptyset$ and let $x\in X_3^\geq(S)$. 

        If $x\in X_3(S)$, let $u_1,u_2,u_3$ be the vertices in $S$ that contain the element $x$ and $u_4$ the vertex in $S$ which does not contain $x$. Note that
        $$\left|u_1\cap X_2^\geq(S)\right|=\left|\bigcup_{i=2}^4 (u_1\cap u_i)\right|\leq 1+\underbrace{\left|(u_1\cap u_2)\setminus\{x\}\right|}_{\leq t-2}+\underbrace{\left|(u_1\cap u_3)\setminus\{x\}\right|}_{\leq t-2}+\underbrace{\left|u_1\cap u_4\right|}_{\leq t-1}\leq 3(t-1)-1.$$
        Thus, since $t\leq\frac{r+3}{3}$, or equivalently $r\geq 3(t-1)$, there is at least one element $x_1\in u_1\cap X_1(S)$. Analogously, there exist $x_2\in u_2\cap X_1(S)$ and $x_3\in u_3\cap X_1(S)$. Let us consider $S'=\{v_1,v_2,v_3,v_4\}$, where
        $$v_1=(u_1\setminus\{x\})\cup \{x_2\},\qquad v_2=(u_2\setminus\{x\})\cup \{x_3\},\qquad v_3=(u_3\setminus\{x\})\cup \{x_1\},\qquad v_4=u_4.$$
        It is easy to see that for every $i\neq j$ we have $|u_i\cap u_j|=|v_i\cap v_j|$. Thus, $S'$ is a $2$-packing of $\Kneser{n}{r}$ of size $4$ and $|X_3^\geq(S')|=|X_3^\geq(S)|-1$.

        If $x\in X_4(S)$, let $S=\{u_1,u_2,u_3,u_4\}$. Now we have
        $$\left|u_1\cap X_2^\geq(S)\right|=\left|\bigcup_{i=2}^4 (u_1\cap u_i)\right|\leq 1+\sum_{i=2}^4\underbrace{\left|(u_1\cap u_i)\setminus\{x\}\right|}_{\leq t-2}\leq 3(t-1)-2.$$
        Thus, since $r\geq 3(t-1)$, there is at least two elements $x_1^1,x_1^2\in u_1\cap X_1(S)$. Analogously, there exist $x_i^1,x_i^2\in u_i\cap X_1(S)$ for $i=2,3,4$. Let us consider $S'=\{v_1,v_2,v_3,v_4\}$, where
        \begin{align*}
            &v_1=(u_1\setminus\{x\})\cup \{x_4^1\},\\
            &v_2=(u_2\setminus\{x\})\cup \{x_1^1\},\\
            &v_3=(u_3\setminus\{x,x_3^2\})\cup \{x_1^2,x_2^1\},\\
            &v_4=(u_4\setminus\{x,x_4^2\})\cup \{x_2^2,x_3^1\}.
        \end{align*}
        It is easy to see that for every $i\neq j$ we have $|u_i\cap u_j|=|v_i\cap v_j|$. Thus, $S'$ is a $2$-packing of $\Kneser{n}{r}$ of size $4$ and $|X_3^\geq(S')|=|X_3^\geq(S)|-1$.

        In any case, if $X_3^\geq (S)\neq \emptyset$, we build a $2$-packing $S'$ of size $4$ with $|X_3^\geq (S')|<|X_3^\geq (S)|$. If $X_3^\geq (S')\neq \emptyset$, we repeat the procedure until we get a $2$-packing of four vertices for which each element occurs at most twice.
    \end{proof}
\end{lemma}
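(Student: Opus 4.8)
The statement to prove is Lemma~\ref{lem: packing of size 4}: assuming $r\ge 3$ and $2\le t\le \frac{r+3}{3}$, if $\rho_2(3r-t,r)\ge 4$ then one can find a $4$-element $2$-packing in which every element of the ground set occurs in at most two of the four vertices. My approach is a \emph{normalization} argument: start with any $2$-packing $S=\{u_1,u_2,u_3,u_4\}$ of size $4$ (which exists by hypothesis), and repeatedly repair ``bad'' elements---those lying in three or more of the $u_i$---by swapping them out for elements that occur only once, strictly decreasing the size of $X_3^{\ge}(S)$ at each step while keeping $S$ a valid $2$-packing. Since $|X_3^{\ge}(S)|$ is a nonnegative integer that strictly drops, the process terminates in a $2$-packing whose every element occurs at most twice.

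First I would record the setup consequences of the constraints. From $r\ge 3$ and $t\le\frac{r+3}{3}$ one gets $t\le r-1$, hence $2r+1\le n\le 3r-2$, so Observation~\ref{obs:pack} applies and tells us that pairwise intersections satisfy $|u_i\cap u_j|\le (3r-1)-n=t-1$. The second key numerical fact I would extract is $r\ge 3(t-1)$, equivalent to the hypothesis $t\le\frac{r+3}{3}$; this is exactly the slack that guarantees the existence of the ``private'' singleton-occurrence elements needed for the swaps. The whole engine of the proof is a counting bound on how many elements of a fixed vertex $u_1$ can lie in $X_2^{\ge}(S)$: since those are precisely the elements shared with some other $u_i$, I would bound $|u_1\cap X_2^{\ge}(S)|=\bigl|\bigcup_{i\ge 2}(u_1\cap u_i)\bigr|$ using $|u_1\cap u_i|\le t-1$ together with the fact that a common heavy element $x$ is counted only once. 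Comparing this bound against $r=|u_1|$ forces at least one (in the $X_3$ case) or two (in the $X_4$ case) elements of $u_1$ to have occurrence exactly one.

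The core of the argument splits on whether the chosen bad element $x\in X_3^{\ge}(S)$ lies in exactly three vertices ($x\in X_3(S)$) or in all four ($x\in X_4(S)$). In the first case I would pull private singletons $x_1,x_2,x_3$ from $u_1,u_2,u_3$ and cyclically reassign them while deleting $x$, defining $v_1=(u_1\setminus\{x\})\cup\{x_2\}$, etc., leaving $u_4$ untouched; the cyclic rotation is what preserves every pairwise intersection size, so $S'$ is still a $2$-packing by Observation~\ref{obs:pack}, and $x$ no longer sits in three sets, so $|X_3^{\ge}|$ drops by one. The second case is the more delicate one---and the step I expect to be the main obstacle---because removing $x$ from all four vertices and refilling requires \emph{two} private singletons per vertex and a more intricate reassignment pattern to guarantee that no new heavy element is created while all six pairwise intersection sizes are preserved. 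Here the sharper count $|u_1\cap X_2^{\ge}(S)|\le 3(t-1)-2$ (the heavy $x$ is subtracted twice since it contributes nothing to the private-singleton budget after accounting) combined with $r\ge 3(t-1)$ yields the needed two singletons $x_i^1,x_i^2$ in each $u_i$. I would verify directly that the prescribed $v_1,\dots,v_4$ preserve all intersection cardinalities; the careful bookkeeping that the swapped-in elements were genuinely private (occurrence one) ensures no unintended new intersections appear. Finally I would close by noting the monovariant argument: each repair strictly decreases $|X_3^{\ge}(S)|$, so finitely many iterations produce the desired $2$-packing with $i_x(S)\le 2$ for all $x$.
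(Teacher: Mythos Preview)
Your proposal is correct and follows essentially the same approach as the paper's proof: the same case split on whether the bad element $x$ lies in three or all four vertices, the same counting bounds $|u_1\cap X_2^{\ge}(S)|\le 3(t-1)-1$ (resp.\ $3(t-1)-2$), the same cyclic swap in the $X_3$ case and the more intricate two-singleton reassignment in the $X_4$ case, and the same monovariant descent on $|X_3^{\ge}(S)|$. The only thing to flesh out when writing it up is the explicit verification that the $X_4$-case swap preserves all six pairwise intersection sizes, which the paper also leaves as ``easy to see.''
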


We can prove, analogously, the following lemma.


\begin{lemma}\label{lem: packing of size 5}
    Let $r$, and $t$ be positive integers such that $r\geq 3$ and $2\leq t\leq \frac{r+4}{4}$. If $\packing(3r-t,r)\geq 5$, then there exists a $2$-packing $S$ of $\Kneser{3r-t}{r}$ with $|S|=5$ and $i_x(S)\leq 2$ for every $x\in[n]$.
    \begin{proof}
        Let $n=3r-t$ and let $S$ be a $2$-packing of $\Kneser{n}{r}$ with cardinality $|S|=5$. Note that since $r\geq 3$, we have $\frac{r+4}{4}<r-1$. Thus $2\leq t< r-1$ and it follows that $2r+1<n\leq 3r-2$. Then, by Observation \ref{obs:pack}, every pair of vertices in $S$ intersect in at most $3r-1-n=t-1$ elements. Let us assume $X_3^\geq(S) \neq\emptyset$ and let $x\in X_3^\geq(S)$. 

        If $x\in X_3(S)\cup X_4(S)$, let $u_1,u_2,u_3$ be vertices in $S$ that contain the element $x$ and $u_4,u_5$ the remaining vertices in $S$ such that $x\notin u_5$. Note that 
        $$\left|u_1\cap X_2^\geq(S)\right|=\left|\bigcup_{i=2}^5 (u_1\cap u_i)\right|\leq 1+\sum_{i=2,3}\underbrace{\left|(u_1\cap u_i)\setminus\{x\}\right|}_{\leq t-2}+\sum_{i=4,5}\underbrace{\left|u_1\cap u_i\right|}_{\leq t-1}\leq 4(t-1)-1.$$
        Thus, since $t\leq\frac{r+4}{4}$, or equivalently $r\geq 4(t-1)$, there is at least one element $x_1\in u_1\cap X_1(S)$. Analogously, there exist $x_2\in u_2\cap X_1(S)$ and $x_3\in u_3\cap X_1(S)$. Let us consider $S'=\{v_1,v_2,v_3,v_4,v_5\}$, where
        $$v_1=(u_1\setminus\{x\})\cup \{x_2\},\quad v_2=(u_2\setminus\{x\})\cup \{x_3\},\quad v_3=(u_3\setminus\{x\})\cup \{x_1\},\quad v_4=u_4,\quad v_5=u_5.$$
        Note that if $x\in X_4(S)$, then $x\in X_1(S')$. It is easy to see that for every $i\neq j$ we have $|u_i\cap u_j|=|v_i\cap v_j|$. Therefore, $S'$ is a $2$-packing of $\Kneser{n}{r}$ of size $5$ and $|X_3^\geq(S')|=|X_3^\geq(S)|-1$.

        If $x\in X_5(S)$, let $S=\{u_1,u_2,u_3,u_4,u_5\}$. We have
        $$\left|u_1\cap X_2^\geq(S)\right|=\left|\bigcup_{i=2}^5 (u_1\cap u_i)\right|\leq 1+\sum_{i=2}^5\underbrace{\left|(u_1\cap u_i)\setminus\{x\}\right|}_{\leq t-2}\leq 4(t-1)-3.$$
        Thus, since $r\geq 4(t-1)$, there are at least three elements $x_1^1,x_1^2,x_1^3\in u_1\cap X_1(S)$. Analogously, there exist $x_i^1,x_i^2,x_i^3\in u_i\cap X_1(S)$ for $i=2,3,4,5$. Let us consider $S'=\{v_1,v_2,v_3,v_4,v_5\}$, where
        \begin{align*}
            &v_1=(u_1\setminus\{x\})\cup \{x_5^1\},\\
            &v_2=(u_2\setminus\{x\})\cup \{x_1^1\},\\
            &v_3=(u_3\setminus\{x,x_3^3\})\cup \{x_1^2,x_2^1\},\\
            &v_4=(u_4\setminus\{x,x_4^2,x_4^3\})\cup \{x_1^3,x_2^2,x_3^1\},\\
            &v_5=(u_5\setminus\{x,x_5^2,x_5^3\})\cup \{x_2^3,x_3^2,x_4^1\}.
        \end{align*}
        It can be easily checked that for every $i\neq j$ we have $|u_i\cap u_j|=|v_i\cap v_j|$. It turns out that $S'$ is a $2$-packing of $\Kneser{n}{r}$ of size $5$ and $|X_3^\geq(S')|=|X_3^\geq(S)|-1$.

        In any case, if $X_3^\geq (S)\neq \emptyset$, we get a $2$-packing $S'$ of size $5$ with $|X_3^\geq (S')|<|X_3^\geq (S)|$. If $X_3^\geq (S')\neq \emptyset$, we repeat the procedure until we get a $2$-packing of five vertices for which each element occurs at most twice.
    \end{proof}
\end{lemma}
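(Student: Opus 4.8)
The plan is to prove Lemma~\ref{lem: packing of size 5} in direct analogy with the proof of Lemma~\ref{lem: packing of size 4}, the only structural difference being that a $2$-packing now has five vertices instead of four, so an element can occur up to five times and the counting bounds shift accordingly. The goal is a cleaning procedure: starting from any $2$-packing $S$ of $\Kneser{3r-t}{r}$ with $|S|=5$, whenever some element $x$ occurs at least three times, we replace $S$ by another $2$-packing $S'$ of size $5$ with strictly fewer high-occurrence elements, i.e. $|X_3^\geq(S')|<|X_3^\geq(S)|$, and iterate until $X_3^\geq(S')=\emptyset$.

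First I would record the numerical preliminaries: since $r\ge 3$ we get $\frac{r+4}{4}<r-1$, hence $2\le t<r-1$, which forces $2r+1<n\le 3r-2$, so Observation~\ref{obs:pack} applies and every pair of vertices of $S$ intersects in at most $3r-1-n=t-1$ elements. The hypothesis $t\le\frac{r+4}{4}$ is equivalent to $r\ge 4(t-1)$, and this is the exact inequality I will invoke to guarantee that the replacement elements I need actually exist. The whole argument then splits on the occurrence multiplicity of the chosen element $x\in X_3^\geq(S)$.

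Next I would handle the two cases $x\in X_3(S)\cup X_4(S)$ and $x\in X_5(S)$ separately, mirroring the $X_3$/$X_4$ split in Lemma~\ref{lem: packing of size 4}. In the first case, list the three vertices $u_1,u_2,u_3$ containing $x$ and the remaining $u_4,u_5$ (with $x\notin u_5$), and bound $|u_1\cap X_2^\geq(S)|$ above by summing $|u_1\cap u_i|$ over $i\neq 1$: the two intersections with $u_2,u_3$ lose a unit by removing $x$, giving the bound $1+2(t-2)+2(t-1)=4(t-1)-1$. Since $r\ge 4(t-1)$, element $x_1\in u_1\cap X_1(S)$ exists, and symmetrically $x_2,x_3$; the cyclic swap $v_1=(u_1\setminus\{x\})\cup\{x_2\}$, etc., preserves all pairwise intersection sizes and drops the occurrence of $x$. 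In the $X_5$ case all five intersections contribute $t-2$, yielding $4(t-1)-3$, so three fresh singleton elements per vertex are available, enough to feed the more elaborate swap that dismantles the quintuple occurrence of $x$ while keeping $|u_i\cap u_j|=|v_i\cap v_j|$.

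The main obstacle — and the only genuinely non-routine part — is verifying that in the $X_5$ case the prescribed substitution $v_3,v_4,v_5$ really does preserve every pairwise intersection size and produces a \emph{valid} vertex set (distinct $r$-subsets with the swapped elements landing in the right fibers). The danger is double counting: the same element must not be used as both a removed and an added token across different $v_i$, and the $x_i^j$ must be genuinely in $X_1(S)$ so that reinserting them elsewhere cannot inflate an intersection. I would check each of the $\binom{5}{2}=10$ pairs by tracking which tokens enter and leave: since every added $x_i^j$ lies in exactly one $u_i$ and every removed token is either $x$ or a private element, each swap trades one element of $u_i\cap u_j$ for a non-shared element, leaving the cardinality intact. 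Once both cases give $|X_3^\geq(S')|=|X_3^\geq(S)|-1$, the iteration terminates after finitely many steps in a $2$-packing of size $5$ with $i_x\le 2$ for all $x$, completing the proof.
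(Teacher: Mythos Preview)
Your proposal is correct and follows essentially the same approach as the paper: the identical case split $x\in X_3(S)\cup X_4(S)$ versus $x\in X_5(S)$, the same bounds $4(t-1)-1$ and $4(t-1)-3$ on $|u_1\cap X_2^{\ge}(S)|$, the same cyclic swap in the first case, and an iterated reduction of $|X_3^{\ge}|$. The only thing the paper adds beyond your sketch is the explicit formula for the five substituted vertices in the $X_5$ case, which you correctly identify as the point requiring careful verification.
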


From Proposition~\ref{prp:3r-3}, and some other results of similar nature, one can suspect that $\packing(n,r)$ will be small if $n$ is close to $3r$. The following result describes how close $n$ has to be to $3r-3$ in order to have  $\packing(n,r)$ equal to $3$ or $4$.

\begin{theorem}\label{thm: packing 3 and 4}
    If $r$, $n$ and $t$ are positive integers such that $2\leq t\leq r-1$ and $n=3r-t$, then:
    \begin{enumerate}
        \item if $t\le \frac{r+5}{5}$, then $\packing(n,r)=3$;
        \item if $\frac{r+5}{5}<t \le \frac{2r+9}{9}$, then $\packing(n,r)=4$.
    \end{enumerate}
    \begin{proof}
    \begin{enumerate}
        \item Let us suppose $t\le \frac{r+5}{5}$, which is equivalent to $r\geq 5(t-1)$.
        Assume $\packing(n,r)\geq 4$ and let $S$ be a packing of $\Kneser{n}{r}$ of cardinality $4$. Note that since $r\geq 5(t-1)>3(t-1)$, we have $2\leq t\leq \frac{r+3}{3}$ and $r\geq 3$. Thus, by Lemma \ref{lem: packing of size 4}, we may consider a $2$-packing $S$ for which $i_x(S)\leq 2$ for every $x\in [n]$. Let us note that by Observation \ref{obs:pack} we have $|u\cap v|\leq t-1$ for each $u,v\in S$, and it follows
        $$|X_2(S)|=\sum_{{\substack{u,v\in S\\ u\neq v}}}\underbrace{|u\cap v|}_{\leq t-1}\leq \binom{4}{2}(t-1)=6(t-1).$$
        On the other hand, since $|X_1(S)|\leq n-|X_2(S)|$, we have
        \[
        \begin{array}{lcl}
        \displaystyle{ 4r=|S|r=\sum_{x\in [n]}i_x(S) } 
        & = & 2|X_2(S)|+|X_1(S)| \\
        & \le & n+|X_2(S)| \\
        & \le & n+6(t-1) \\
        & \le & 3r+5t-6.
        \end{array}
        \] 
        Thus, $r\leq 5(t-1)-1$ and we arise to a contradiction since $r\geq 5(t-1)$. Therefore $\packing(n,r)\leq 3$. In order to see that it is equal to $3$ it is enough to consider the set $S$ given by $S=\{u_1,u_2,u_3\}$ where
        $$u_1 = [r],\quad u_2 = [t-1]\cup[(r+1)..(2r-t+1)],\quad u_3 = \{1\}\cup[(2r-t+1)..(3r-t)].$$
        We have $|u_1\cap u_2|=t-1$ and $|u_1\cap u_3|=|u_2\cap u_3|=1$. So, $S$ is a 2-packing and $\packing(n,r)=3$.

        \item Now, let us suppose $\frac{r+5}{5}<t \le \frac{2r+9}{9}$, which is equivalent to $\frac{9}{2}(t-1)\leq r < 5(t-1)$.
        Assume $\packing(n,r)\geq 5$ and let $S$ be a packing of $\Kneser{n}{r}$ of size $5$. Note that since $r\geq \frac{9}{2}(t-1)>4(t-1)$, we have $2\leq t\leq \frac{r+4}{4}$ and $r\geq 3$. Thus, by Lemma \ref{lem: packing of size 5}, we may consider a $2$-packing $S$ for which $i_x(S)\leq 2$ for every $x\in [n]$. By Observation \ref{obs:pack} we have $|u\cap v|\leq t-1$ for each $u,v\in S$. Consequently,
        $$|X_2(S)|=\sum_{{\substack{u,v\in S\\ u\neq v}}}\underbrace{|u\cap v|}_{\leq t-1}\leq \binom{5}{2}(t-1)=10(t-1).$$
        It follows that
        \[
        \begin{array}{lcl}
        \displaystyle{ 5r =|S|r = \sum_{x\in [n]}i_x(S) } 
        & = & 2|X_2(S)|+|X_1(S)| \\
        & \le & n+|X_2(S)| \\
        & \le & n+10(t-1)=3r+9t-10.
        \end{array}
        \] 
        Thus, $r\leq \frac{9}{2}(t-1)-\frac{1}{2}$ and we arise to a contradiction since $r\geq \frac{9}{2}(t-1)$. Therefore $\packing(n,r)\leq 4$. In order to see that it is equal to $4$ it is enough to consider the set $S$ given by $S=\{u_1,u_2,u_3,u_4\}$ where
        \begin{align*}
            u_1 &= A_{12}\cup A_{13}\cup A_{14}\cup B_1\\
            u_2 &= A_{12}\cup A_{23}\cup A_{24}\cup B_2\\
            u_3 &= A_{13}\cup A_{23}\cup A_{34}\cup B_3\\
            u_4 &= A_{14}\cup A_{24}\cup A_{34}\cup B_4
        \end{align*}
        where the sets in $\big\{\{A_{ij}\}_{i\neq j}\cup\{B_i\}_{i=1}^{4}\big\}$ are pairwise disjoint with $|A_{ij}|=t-1$, and $|B_i|=r-3(t-1)$. Note that this is possible since 
        $$6(t-1)+4(r-3(t-1))=4r-6(t-1)=\underbrace{(3r-t)}_{=n}+\underbrace{(r-(5t-6))}_{\leq 0}\leq n.$$
        We have $|u_i\cap u_j|=t-1$ for every $i\neq j$. So, $S$ is a packing and $\packing(n,r)=4$.
    \end{enumerate}
    \end{proof}
\end{theorem}

\begin{corollary}
    If $\frac{14}{5}r-1\leq n\leq 3r-2$, then $\packing(n,r)=3$.
\end{corollary}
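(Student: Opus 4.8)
The plan is to recognize this corollary as a direct reformulation of the first part of Theorem~\ref{thm: packing 3 and 4}, obtained through the substitution $t = 3r - n$. The theorem is stated with an upper bound on the parameter $t$, while the corollary is phrased as a lower bound on $n$; so the only real work is to verify that these two descriptions coincide, together with checking the side conditions $2 \le t \le r-1$ that are needed to invoke the theorem.

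First I would set $t = 3r - n$, so that $n = 3r - t$ exactly matches the setup of Theorem~\ref{thm: packing 3 and 4}. The hypothesis $n \le 3r-2$ immediately gives $t \ge 2$. For the remaining bound, I would translate $n \ge \frac{14}{5}r - 1$ into the $t$-language: starting from $3r - n \le \frac{r+5}{5}$ and clearing the denominator gives $15r - 5n \le r + 5$, i.e. $n \ge \frac{14r-5}{5} = \frac{14}{5}r - 1$. Hence the assumed lower bound on $n$ is precisely equivalent to $t \le \frac{r+5}{5}$, which is the hypothesis of part~(1) of the theorem.

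It then remains only to confirm the ambient requirement $t \le r-1$. Since $\frac{r+5}{5} \le r-1$ holds whenever $r \ge 3$ (and the interval $\frac{14}{5}r-1 \le n \le 3r-2$ is nonempty only for $r \ge 5$, as $\frac{14}{5}r-1 \le 3r-2$ forces $r \ge 5$), the condition $t \le \frac{r+5}{5}$ automatically yields $t \le r-1$. With $2 \le t \le r-1$ and $t \le \frac{r+5}{5}$ verified, part~(1) of Theorem~\ref{thm: packing 3 and 4} applies and gives $\packing(n,r) = 3$. I do not expect any genuine obstacle: the entire mathematical content is carried by Theorem~\ref{thm: packing 3 and 4}, and this corollary simply repackages its first case into a single clean hypothesis on $n$; the only point requiring a moment of care is the boundary bookkeeping ensuring $2 \le t \le r-1$ throughout the stated range.
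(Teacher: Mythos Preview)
Your proposal is correct and matches the paper's approach: the corollary is stated immediately after Theorem~\ref{thm: packing 3 and 4} with no separate proof, so the intended derivation is exactly the substitution $t=3r-n$ and the check that $\frac{14}{5}r-1\le n\le 3r-2$ is equivalent to $2\le t\le \frac{r+5}{5}$, which is the hypothesis of part~(1). Your verification of the side condition $t\le r-1$ and the nonemptiness of the interval (forcing $r\ge 5$) is accurate bookkeeping that the paper leaves implicit.
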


\begin{corollary}
    If $\frac{25}{9}r-1\leq n< \frac{14}{5}r-1$, then $\packing(n,r)=4$.
\end{corollary}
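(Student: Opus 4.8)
The plan is to obtain this corollary as an immediate consequence of part~2 of Theorem~\ref{thm: packing 3 and 4}, via the change of variable $t=3r-n$ (equivalently $n=3r-t$, matching the parametrization of the theorem). The only work is to verify that the stated range for $n$ translates exactly into the range $\frac{r+5}{5}<t\le\frac{2r+9}{9}$ required by that part of the theorem.

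First I would convert each defining inequality. For the lower bound, $\frac{25}{9}r-1\le n$ is equivalent, after multiplying through by $9$ and substituting $n=3r-t$, to $25r-9\le 27r-9t$, i.e.\ $9t\le 2r+9$, which is $t\le\frac{2r+9}{9}$. For the upper bound, $n<\frac{14}{5}r-1$ is equivalent, after multiplying by $5$ and substituting, to $15r-5t<14r-5$, i.e.\ $r+5<5t$, which is $\frac{r+5}{5}<t$. Thus the hypothesis $\frac{25}{9}r-1\le n<\frac{14}{5}r-1$ is precisely equivalent to $\frac{r+5}{5}<t\le\frac{2r+9}{9}$.

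It then remains to check that the standing hypotheses $2\le t\le r-1$ of Theorem~\ref{thm: packing 3 and 4} hold for the integer $t=3r-n$ in this range: the inequality $t\le\frac{2r+9}{9}\le r-1$ holds once $r\ge 3$, while $t>\frac{r+5}{5}\ge 2$ holds for $r\ge 5$ (and the prescribed interval for $n$ is nonempty only for such $r$, so no genuine restriction is lost). Applying part~2 of the theorem then yields $\packing(n,r)=4$, completing the argument. There is no real obstacle here — the content is entirely carried by Theorem~\ref{thm: packing 3 and 4}, and the corollary is just its restatement as a condition on $n$ rather than on $t$; the only point requiring mild care is confirming that the endpoint translations are exact (which they are, since both reductions are reversible equivalences) and that $t=3r-n$ remains a legitimate integer parameter in the theorem's admissible range.
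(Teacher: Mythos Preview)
Your proposal is correct and matches the paper's approach: the corollary is stated there without proof, as an immediate rephrasing of part~2 of Theorem~\ref{thm: packing 3 and 4} under the substitution $t=3r-n$, and your algebra verifying the equivalence of the two ranges is accurate. (A tiny remark: the condition $t\ge 2$ actually follows for every $r\ge 1$ since $t>\frac{r+5}{5}>1$, so you need not restrict to $r\ge 5$; but this does not affect the argument.)
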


\section{Concluding remarks}

In Section~\ref{sec:monotone}, we proved that $\gamma_k$ is monotonically decreasing, where $r$ is fixed and $n\ge 2(k+r)$ is growing. We wonder if the lower bound on $n$ could be improved, and pose the following question.

\begin{problem}
Is there an integer $n_0$, where $n_0 < 2(k+r)$, such that for any $n \geq n_0$, we have $\kdom{k}(n,r) \geq \kdom{k}(n+1,r)$?
\end{problem}

In Section~\ref{sec:exact}, we established exact values of $\kdom{k}(n,r)$ and $\ktuplet{k}(n,r)$ for all $n$, where $n\ge r(k+r)-1$. It is thus natural to ask what are the values of these two invariants when $n$ is smaller than $r(k+r)-1$. In particular, the most interesting case seems to be that of odd graphs, that is, when $n=2r+1$. 

\begin{problem}
Determine or provide upper and lower bounds on $\kdom{k}(2r+1,r)$, $\ktuple{k}(2r+1,r)$ and $\ktuplet{k}(2r+1,r)$, for $r>2$.
\end{problem}

We mentioned an open problem in Section~\ref{sec:packing}, which is concerned with the exact values of $\packing(3r-3,r)$ where $r\le 8$ (recall that for $r\ge 9$, we already established the values of $\packing(3r-3,r)$).

\begin{problem}
 Is it true that the lower bounds in Table~\ref{tab:2pack} are in fact exact values of $\rho_2(3r-3,r)$, where $r\le 8$.
\end{problem}

Regarding the 2-packing number of odd graphs $K(2r+1,r)$, we have proved in Proposition \ref{prp: 2packingIncreasing} that  $\packing(2r'+1,r')\geq C \cdot \packing(2r+1,r)$ when $r'>r+1$, where $C=2$. In \cite{cornet2023k} it is shown that $\packing(7,3)=7$ and $\packing(11,5)=66$. So, we wonder whether this $C$ could be improved.
\begin{problem}
    Is there some $C(r)>2$ such that $\packing(2(r+2)+1,r+2)\geq C(r)\packing(2r+1,r)$ holds for all $r\ge 2$?
\end{problem}

\section*{Acknowledgments}

The first and the third author were supported by the Slovenian Research and Innovation agency (grants P1-0297, J1-2452, J1-3002, and J1-4008). The second author was partially supported by the Argentinian National Agency for the Promotion of Research, Technological Development and Innovation (grant PICT-2020-03032), the Argentinian National Council for Scientific and Technical Research (grant PIP CONICET 1900) and the National University of Rosario (grant PID 80020210300068UR). Research of the fourth author was supported in part by the South African National Research Foundation under grant number 132588 and the University of Johannesburg. The second and fourth authors thank the University of Maribor for their hospitality.

\bibliography{DomKneser}

\end{document}